\def\C{\mathbb{C}}
\def\R{\mathbb{R}}
\newcolumntype{C}[1]{>{\centering\let\newline\\\arraybackslash\hspace{0pt}}m{#1}}
\newcommand\restr[2]{{
  \left.\kern-\nulldelimiterspace 
  #1 
  \vphantom{\big|} 
  \right|_{#2} 
  }}
\crefname{hypothesis}{Hypothesis}{Hypotheses}
\title{Surface parameterization via optimization of relative entropy and quasiconformality\thanks{Submitted to the editors DATE.
\funding{This work was supported in part by the HKRGC GRF under project ID 14309125 (to Lok Ming Lui).}}}
\author{Zhipeng Zhu\thanks{
              Department of Mathematics, The Chinese University of Hong Kong
              (\email{zpzhu@math.cuhk.edu.hk}).}
           \and
           Lok Ming Lui\thanks{
              Department of Mathematics, The Chinese University of Hong Kong
              (\email{lmlui@math.cuhk.edu.hk}).}
}
\begin{document}
\maketitle

\begin{abstract}
We propose a novel method for parameterizations of triangle meshes by finding an optimal quasiconformal map that minimizes an energy consisting of a relative entropy term and a quasiconformal term. By prescribing a prior probability measure on a given surface and a reference probability measure on a parameter domain, the relative entropy evaluates the difference between the pushforward of the prior measure and the reference one. The Beltrami coefficient of a quasiconformal map evaluates how far the map is close to an angular-preserving map, i.e., a conformal map. By adjusting parameters of the optimization problem, the optimal map achieves a desired balance between the preservation of measure and the preservation of conformal structure. To optimize the energy functional, we utilize the gradient flow structure of its components. The gradient flow of the relative entropy is the Fokker-Planck equation, and we apply a finite volume method to solve it. Besides, we discretize the Beltrami coefficient as a piecewise constant function and apply the linear Beltrami solver to find a piecewise linear quasiconformal map.
\end{abstract}

\begin{keywords}
Surface parameterization, measure-preserving maps, angular-preserving maps, quasiconformal maps
\end{keywords}

\begin{AMS}
65D18, 68U05, 30C62, 28D05
\end{AMS}

\section{Introduction}
\label{sec:intro}
In contemporary applied mathematics and computer science, triangulated surfaces play an important role in many fields, such as numerical PDEs, texture map and 3D reconstruction in computer graphics, brain mapping in medical imaging, and 3D object detection and classification in computer vision. To ensure the convergence and stability of many algorithms, the quality of a triangle mesh is highly valued. We observe that researchers often give the following two requirements for triangle meshes. The first is about the distribution of vertex points of a triangulation. For example, to secure the accuracy of numerical PDE algorithms, researchers often require that the vertex points be dense enough in some region. Another requirement is that the triangle faces should be close enough to equilateral triangles, which is also essential for the convergence of many algorithms. In this work, we propose a novel method to address the two requirements in a unified framework. That is, given a surface $\mathcal{S}$ and a parameter domain $\Omega$, we minimize over all bijective maps $f:\mathcal{S}\to\Omega$, a functional involving the relative entropy and quasiconformality. The relative entropy controls the vertex distribution of the mesh while the quasiconformality measures the angular distortion of the map $f$.

To mathematically define the relative entropy, we need to prescribe a probability measure $\gamma$ on $\mathcal{S}$ and a reference probability measure $\nu$ on $\Omega$. For example, if we hope to compute an area-preserving parameterization, we can define the measure of each triangle face as its area divided by the total area and simply take $\gamma$ to be Lebesgue divided by a constant. To compare the difference between the pushforward measure $f_* \gamma$ and $\nu$, one approach is to find the Wasserstein distance $W_2(f_*\gamma,\nu)$. However, Wasserstein distances can be quite difficult to compute and optimize. Thus, we turn to the relative entropy $\mathcal{H}(f_*\gamma|\nu)$, an alternative that is easier to compute and minimize. To find the optimal map $f$, we utilize the fact that the Fokker-Planck equation is the Wasserstein gradient flow of the relative entropy functional in the space of probability measures~\cite{jordan1998variational}. Suppose we have an initial map $f_0$ such that $(f_0)_*\gamma$ has density $\rho_0$. The Fokker-Planck equation will give us a time-dependent density $\rho_t$ that approaches the density of the reference measure as $t\to\infty$. This $\rho_t$, together with the continuity equation, will give us a time-dependent vector field $v_t$. Integrating $v_t$ from the initial map $f_0$ will give us a time-dependent map $f_t$ such that $(f_t)_*\gamma$ has density $\rho_t$. Thus, we can expect that the densities can be nearly matched by $f_t$ when $t$ is large.

The quasiconformal maps are the generalization of conformal maps. A conformal map maps infinitesimal circles to circles while a quasiconformal map maps infinitesimal circles to ellipses with bounded eccentricity. Every quasi-conformal map $f:\C\to\C$ is associated with a complex-valued function $\mu$ with $|\mu(z)|<1$, which is called its Beltrami coefficient. The magnitude of Beltrami coefficient at a point measures the local angular distortion. If the Beltrami coefficient is zero at a point, then the quasiconformal map will preserve angles in the infinitesimal level. Besides, by the measurable Riemann mappping theorem~\cite{ahlfors1960riemann}, there is a $1$-$1$ correspondence between measurable Beltrami coefficients and normalized orientation-preserving homeomorphisms in the complex plane. As a result, a quasiconformal map can be recovered from its Beltrami coefficient by solving the Beltrami equation. To evaluate the global angular distortion, we propose to compute the $L^2$-norm of the Beltrami coefficient. The $L^2$-norm of the gradient of the Beltrami coefficient can also be involved to enhance the smoothness of a map. To reduce the angular distortion of an arbitrary map $f$, we can first calculate its Beltrami coefficient. Solving the Beltrami equation with the reduced and smoothed Beltrami coefficient will then give us an updated map with reduced angular distortion.

The energy functional we hope to minimize consists of the relative entropy, the $L^2$-norm of the Beltrami coefficient $\mu$, and the $L^2$-norm of the gradient of $\mu$. It is difficult to minimize it directly, since the feasible set consists of all piecewise linear bijective maps. But We notice that each term has a gradient flow in some appropriate space. So, we propose a splitting method to optimize the energy. Briefly speaking, we alternate between one term of the energy to get an updated bijective map each time and for another. To be more precise, we need to choose time steps $t_1$, $t_2$, and $t_3$. In each iteration, we let the current map $f$ to evolve under the gradient flow of the relative entropy for time $t_1$ first, then under the gradient flow of $L^2$-norm of $\mu$ for time $t_2$, and finally under the gradient flow of $L^2$-norm of $\nabla\mu$ for time $t_3$. By manipulating the values of $t_1$, $t_2$, and $t_3$, we can emphasize which term is more important and which is less. When we explicitly discretize these gradient flows, and $t_3$, we may further pick time scales that are smaller than $t_1$, $t_2$, and $t_3$ to enhance the accuracy and stability.

\subsection{Related Works}
Surface parameterization, which focuses on mapping a complicated surface in $\R^3$ to a standard domain in $\R^2$ to facilitate subsequent manipulation and processing, has attracted much research attention in recent decades. Profound research works include discrete surface harmonic maps~\cite{pinkall1993computing,remacle2010high}, shape-preserving paramerization~\cite{floater1997parametrization},  discrete conformal maps~\cite{levy2023least,desbrun2002intrinsic,gu2004genus}, feature-based parameterization~\cite{zhang2005feature}, locally authalic maps~\cite{desbrun2002intrinsic}, large diffeomorphic
distance metric mapping (LDDMM)~\cite{beg2005computing,zhang2017frequency}, discrete surface Ricci flow~\cite{jin2008discrete}, Lie advection~\cite{zou2011authalic}, optimal transport maps~\cite{zhao2013area,cui2019spherical}, and density-equalizing maps~\cite{choi2018density}. 

Our method for optimizing the relative entropy relies on the fact that its Wasserstein gradient flow is the Fokker-Planck equation. In recent years, several interesting and successful numerical methods have been proposed to approximate and compute Wasserstein gradient flows, including finite elements~\cite{burger2009mixed}, finite volume method~\cite{bessemoulin2012finite}, the steepest descent scheme~\cite{blanchet2008convergence}, Lagrangian maps~\cite{during2010gradient,junge2017fully}, evolving diffeomorphisms~\cite{carrillo2010numerical}, Galerkin method~\cite{sun2018discontinuous}, computational geometry method~\cite{benamou2016discretization}, blob method~\cite{carrillo2019blob}, and primal-dual method~\cite{carrillo2022primal}.

Since quasiconformal maps are less restrictive than conformal maps, there has been an increasing interest in computational quasiconformal geometry and its applications in the recent year. Existing methods for computing quasiconformal maps include auxiliary metric~\cite{zeng2012computing}, quasi-Yamabe flow~\cite{zeng2012computing}, extermal quasiconformal map~\cite{weber2012computing}, holomorphic Beltrami flow~\cite{lui2012optimization}, linear Beltrami solvers~\cite{lui2013texture,qiu2019computing}, and the welding method~\cite{zhu2022parallelizable}. Applications of computational quasiconformal geometry include image and surface registration~\cite{lam2014landmark,qiu2020inconsistent}, and shape analysis~\cite{choi2020tooth,choi2020shape}. 

\section{Background and Formulation}
\label{sec:background}
\subsection{Some Preliminaries}
\subsubsection{Relative Entropy and Fokker-Planck Equation}
The relative entropy functional can be faithfully defined on the space of $\mathcal{P}_2^{ac}(\R^d)$ of absolutely continuous probability measures on $\R^d$ equipped with the Wasserstein-2 metric. For any $\mu,\nu\in\mathcal{P}_2^{ac}(\R^d)$, there always exists a transport map $t:\R^d\to\R^d$ such that $t_*\mu=\nu$. Thus, their Wasserstein distance can be given by
\begin{equation}
    W_2(\mu,\nu) = \min\bigg\{ \int_{\R^d}|x-t(x)|^2\,d\mu(x):t_*\mu=\nu \bigg\}
\end{equation}
Although relative entropy is not a metric, it gives another way to measure the distance between two probability measures $\mu,\nu\in\mathcal{P}_2^{ac}(\R^d)$. Mathematically, it is defined as
\begin{equation}
\label{eq:relative_entropy}
    \mathcal{H}(\mu|\nu) = \int_{\R^d}\dfrac{d\mu}{d\nu}\log\bigg(\dfrac{d\mu}{d\nu}\bigg)\,d\nu
\end{equation}
where the density $d\mu/d\nu$ is the Radon-Nikodym derivative of $\mu$ with respect to $\nu$. Similarly to the Wasserstein interpolation, the gradient flow of relative entropy leads to another interpolation between two probability measures. Let us denote the density of $\mu$ and $\nu$ by $\zeta$ and $u$ respectively. The Jordan-Kinderlehrer-Otto scheme states that the Fokker-Planck equation
\begin{equation}
\label{eq:fokker-planck}
    \partial_t\rho_t - \nabla\cdot(\nabla\rho_t+\rho_t\nabla V) = 0\ \ \ \text{in }\R^d\times [0,\infty)
\end{equation}
where $V$ satisfies $u=e^{-V}$, is the gradient flow of relative entropy. If we set $\rho_0=\zeta$, the density $\rho_t$ will converge to $u$ as $t\to\infty$. While Fokker-Planck equation gives us the flow of the density, we are especially concerned with the transport map in this article. This map can be found by solving the continuity equation
\begin{equation}
    \partial_t \rho_t + \nabla\cdot(v_t\rho_t) = 0 \ \ \ \ \text{in }\R^d\times [0,T]
\end{equation}
to get a time-dependent vector field $v:(x,t)\to v_t(x)\in\R^d$. Integrating $v_t$ from time $0$ to time $T$ produces the transport map that pushforwards $\rho_0$ to $\rho_T$. 

The above is a sketch of the mathematical theories that motivate our algorithm. We do not dig into the details, such as the precise definition of gradient flows in metric spaces. Interested readers are highly recommended to read the marvelous book~\cite{ambrosio2008gradient} written by Ambrosio, Gigli, and Savaré.
 
\subsubsection{Planer quasi-conformal theory}
\label{subsec:qc2d}
Quasi-conformal maps are generalizations of conformal maps in the sense that they permit bounded distortion of angles locally. Several equivalent definitions characterize quasi-conformal maps. Here, we give an analytic definition.
\begin{definition}
    Let $D$ and $D'$ be two domains in $\C$, and set $k:=\dfrac{K-1}{K+1}$ for some $K\geq 1$. An orientation-preserving homeomorphism $f:D\to D'$ is $K$-quasiconformal if $f\in W_{loc}^{1,2}(\Omega)$ and its partial derivatives satisfy
    \begin{equation}
    \label{eq:qc-definition}
        \left|\dfrac{\partial f}{\partial \bar{z}}\right| \leq k \left| \dfrac{\partial f}{\partial z} \right|
    \end{equation}
    for almost every $z\in D$.
\end{definition}
The most intriguing feature of quasiconformal maps in the complex plane is that we could reduce this definition to the Beltrami equation associated with the complex dilatation $\mu:D\to\C$ called Beltrami coefficients. Their correspondence is established by the mapping theorem of Bers and Ahlfors~\cite{ahlfors1960riemann}.
\begin{theorem}[Measurable Riemann Mapping Theorem]
\label{thm:measurable_riemann}
    Let $\abs{\mu(z)}\leq k<1$ for almost every $z\in \C$. Then there is a solution $f:\hat{\C}\to\hat{\C}$ to the Beltrami equation
    \[\dfrac{\partial f}{\partial \bar{z}} = \mu(z)\dfrac{\partial f}{\partial z}\ \ \ \text{for almost every }z\in\C\]
    which is a $K$-quasi-conformal homeomorphism normalized by the three conditions
    \[f(0)=0;\ \ \ f(1)=1;\ \ \ f(\infty)=\infty\]
    Furthermore, the normalized solution $f$ is unique.
\end{theorem}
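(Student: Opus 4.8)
The plan is to follow the classical functional-analytic proof built around the Beurling--Ahlfors transform: first establish existence for a compactly supported coefficient, then reduce the general case to it, and finally handle normalization and uniqueness.

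The first step is to set up the two singular integral operators on the plane. Let $P$ be the Cauchy transform, $(Ph)(z) = -\frac{1}{\pi}\int_{\C}\frac{h(\zeta)}{\zeta-z}\,dA(\zeta)$, which satisfies $\partial_{\bar z}(Ph)=h$, and let $S:=\partial_z P$ be the Beurling transform, a principal-value integral with kernel $-1/\pi(\zeta-z)^2$. The one analytic fact that drives everything is that $S$ is a Fourier multiplier of modulus one, hence a unitary operator on $L^2(\C)$; by Calder\'on--Zygmund theory it is bounded on every $L^p(\C)$, $1<p<\infty$, and by interpolation with the $L^2$ bound its operator norm tends to $1$ as $p\to 2$. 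So if $|\mu|\le k<1$ with $\mu$ compactly supported, I can fix $p>2$ close enough to $2$ that $k\,\|S\|_{L^p\to L^p}<1$; then $I-\mu S$ is invertible on $L^p(\C)$ via the Neumann series $\sum_{n\ge 0}(\mu S)^n$. Setting $\omega:=(I-\mu S)^{-1}\mu\in L^p(\C)$ and $f(z):=z+(P\omega)(z)$, a direct computation gives $f_{\bar z}=\omega$ and $f_z=1+S\omega$, so the identity $(I-\mu S)\omega=\mu$ is exactly $f_{\bar z}=\mu f_z$; moreover $f\in W^{1,p}_{\mathrm{loc}}$ with $p>2$, so $f$ is locally H\"older continuous by Sobolev embedding, and $f(z)=z+o(1)$ as $z\to\infty$.

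The substantive step is to show that this $f$ is a homeomorphism of $\hat{\C}$, and I expect this to be the \emph{main obstacle}, since it is where the distortion and compactness theory of quasiconformal maps enters rather than bare functional analysis. I would argue by approximation: mollify $\mu$ to smooth, compactly supported $\mu_n$ with $|\mu_n|\le k$ and $\mu_n\to\mu$ a.e.\ and in every $L^q(\C)$; running the construction above with $\mu_n$ produces smooth $f_n$, and since the Jacobian of a solution of the Beltrami equation is $|\partial_z f_n|^2(1-|\mu_n|^2)$, the classical smooth theory shows each $f_n$ is an orientation-preserving local diffeomorphism, which a covering-space argument on the sphere $\hat{\C}$ promotes to a global diffeomorphism. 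A resolvent estimate gives $\omega_n\to\omega$ in $L^p(\C)$, whence $f_n\to f$ locally uniformly. After post-composing the $f_n$ with affine maps so that they all fix $0$ and $1$ (they already fix $\infty$), the standard compactness principle — a locally uniform limit of normalized $K$-quasiconformal homeomorphisms is again a $K$-quasiconformal homeomorphism, never a constant — shows $f$ is a $K$-quasiconformal homeomorphism.

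It remains to remove the compact-support hypothesis and to treat normalization and uniqueness. For general measurable $\mu$ with $|\mu|\le k<1$, I would split $\mu=\mu_0+\mu_\infty$ with $\mu_0$ supported in the closed unit disk and $\mu_\infty$ outside it: solving for $\mu_0$ gives a quasiconformal $g_0$, while $\mu_\infty$ becomes a compactly supported coefficient after conjugation by the inversion $z\mapsto 1/z$, and the two solutions are assembled via the composition rule for Beltrami coefficients. Composing the resulting $K$-quasiconformal homeomorphism with the unique M\"obius transformation sending the images of $0,1,\infty$ back to $0,1,\infty$ produces the normalized solution. For uniqueness, if $f$ and $g$ are two normalized solutions then $h:=f\circ g^{-1}$ has Beltrami coefficient $0$ almost everywhere by the chain rule for Beltrami coefficients, so $h\in W^{1,2}_{\mathrm{loc}}$ with $h_{\bar z}=0$; Weyl's lemma makes $h$ holomorphic, hence a M\"obius transformation fixing $0,1,\infty$, hence the identity, so $f=g$.
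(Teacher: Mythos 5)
The paper does not actually prove this statement: Theorem~\ref{thm:measurable_riemann} is quoted as the classical Ahlfors--Bers mapping theorem and simply cited to \cite{ahlfors1960riemann}, so there is no in-paper argument to measure your proposal against. What you have written is, in outline, the standard proof from that literature (Neumann series for $I-\mu S$ in $L^p$ with $p>2$ chosen so that $k\norm{S}_{L^p\to L^p}<1$, smooth approximation plus compactness of $K$-quasiconformal families for injectivity, conjugation by $z\mapsto 1/z$ to remove the compact-support hypothesis, and Weyl's lemma for uniqueness), and it is essentially correct. Two points deserve more care. First, the smoothness of the approximating solutions $f_n$ does not follow from the $L^p$ construction itself; you must either invoke the classical smooth existence theory (Korn--Lichtenstein/Schauder) or rerun the Neumann series in a H\"older class, using that the Beurling transform is bounded there. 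Second, renormalizing the $f_n$ by affine maps fixing $0$ and $1$ before passing to the limit is both unnecessary and potentially circular: if $f_n(1)-f_n(0)\to 0$ the affine maps degenerate, and excluding this is essentially the injectivity of $f$ you are trying to establish. The clean route is to keep the normal solutions $f_n(z)=z+(P\omega_n)(z)$ as they are --- they already converge locally uniformly to $f$, which is manifestly nonconstant since $f(z)-z\to 0$ at infinity because $\omega$ is supported in $\operatorname{supp}\mu$ --- and apply the version of the compactness theorem asserting that a locally uniform limit of $K$-quasiconformal homeomorphisms is either $K$-quasiconformal or constant. With these repairs your argument is the standard, complete proof of the theorem the paper cites.
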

A similar result can be obtained when $\C$ is replaced by the disk or by arbitrary domains. A direct consequence of the mapping theorem is that one can explicitly find a quasiconformal map by solving the Beltrami equation, whose parameters have a very weak regularity assumption. Indeed, quasiconformal maps are closely related to elliptic partial differential equations~\cite{astala2008elliptic}. Suppose $f=u+iv$ is quasiconformal with Beltrami coefficients $\mu=\rho+i\tau$, where $u,v,\rho$, and $\tau$ are real. Define
\begin{equation}
    A = \frac{1}{1-\abs{\mu}^2}
    \begin{pmatrix}
    (\rho-1)^2 + \tau^2 & -2\tau \\
    -2\tau & (1+\rho)^2 + \tau^2
    \end{pmatrix}.
\label{eqt:dilation_matrix}
\end{equation}
It can be shown that the Beltrami equation is equivalent to
\begin{equation*}
(Df)^t Df = J_f A^{-1}
\end{equation*}
When $\mu$ is supported on a compact set $\Omega$, $f|_{\Omega}$ can be found be minimizing 
\begin{equation}
\label{eq:qc_energy_1}
    \dfrac{1}{2}\int_{\Omega}\,\norm{A^{1/2}\nabla u}^2 + \norm{A^{1/2}\nabla v}^2 dxdy - \int_{\Omega} J_f \, dxdy 
\end{equation}
with a prescribed landmark condition $f(p_1)=q_1$ and $f(p_2)=q_2$ for $p_1,p_2\in\Omega$. If we prescribe a Dirichlet boundary condition $g:\partial\Omega\to\partial\Omega'$, the Beltrami equation usually has no solution. However, we could still aim to find an orientation-preserving homeomorphism minimizing the energy
\begin{equation}
\label{eq:qc_energy_2}
    \dfrac{1}{2}\int_{\Omega}\,\norm{A^{1/2}\nabla u}^2 + \norm{A^{1/2}\nabla v}^2 dxdy
\end{equation}
to get a least-square quasi-conformal map. With some abuse of notation, we let $\Gamma$ denote either a landmark condition or a Dirichlet condition. In the remaining article, we denote the quasiconformal map associated with $\mu$ by $S_{\Gamma}(\mu)$.

\subsection{Formulation of our method}
\label{sec:formulation}
The primary goal of our method is to find a quasiconformal homeomorphism from an initial surface $\mathcal{S}_1$ to a target surface $\mathcal{S}_2$ minimizing an energy functional involving relative entropy and quasiconformality. We give a theoretical formulation here and will talk about the discretizations later. Suppose $\mathcal{S}_1$ is a Riemann surface equipped with an absolutely continuous probability measure $\gamma$. We do not place too many restrictions on the topology of $\mathcal{S}_1$, but we mainly work on Riemann surfaces which admit a metric of constant curvature $0$ in this article. The target surface $\mathcal{S}_2$ is a flat Riemann surface that is homeomorphic to $\mathcal{S}_1$ and is equipped with another absolutely continuous probability measure $\nu$. Based on the topology of $\mathcal{S}_1$, $\mathcal{S}_2$ can be chosen as the unit disk, a circular domain, or the flat torus $\R^2/\mathbb{Z}^2$.

Over all quasiconformal maps $f:\mathcal{S}_1\to\mathcal{S}_2$, we aim to minimize
\begin{equation}
\label{eq:energy_measure_qc0}
    E(f) = \mathcal{H}(f_*\gamma|\nu) + \alpha\norm{\mu_f}_{L^2}^2
\end{equation}
where $\mathcal{H}$ denotes the relative entropy, $\mu_f$ is the Beltrami coefficient associated with $f$, and $\alpha>0$ is prescribed. We need to be cautious about the second term. Although the Beltrami coefficient is independent of the choice of charts and therefore well defined, its $L^2$ norm actually depends on the choice of charts. To make $\norm{\mu_f}_{L^2}^2$ well-defined, one way is to fix a realization of the conformal structure of $\mathcal{S}_1$. That is, we need to find a conformal map $\varphi:\mathcal{S}_1\to \Omega\subset\C$ and integrate the function $\norm{\mu_f}^2$ on $\Omega$. Another way to eliminate the ambiguity is to integrate $\norm{\mu_f}^2$ on $\mathcal{S}_1$ with respect to the volume form or a given surface measure. In our formulation, we choose to adopt the first approach. Now, suppose that $f_*\gamma$ has density $\rho$ and $\nu$ has density $u=e^{-V}$, and write $\mu_f=\sigma+i\tau$ for some real-valued function $\sigma$ and $\tau$ on $\Omega$. Then, we can express the energy as 
\begin{align}
    E(f) &= \int_{\mathcal{S}_2} \rho \log\big(\dfrac{\rho}{u}\big)\,dx + \alpha\int_{\Omega} (|\sigma|^2 + |\tau|^2)\,dx \\
    &= \int_{\mathcal{S}_2} \rho\log\rho\,dx - \int_{\mathcal{S}_2} \rho V\,dx + \alpha\int_{\Omega} (|\sigma|^2 + |\tau|^2)\,dx
\end{align}
Notice that the integration domain of the relative entropy is $\mathcal{S}_2$ while the integration domain of the $L^2$-norm of $\mu_f$ is $\Omega$. To enhance the smoothness of $f$, we also wish to have some control on the distribution of $\mu_f$. To be more precise, we expect that if the distance between $z_1,z_2\in\mathcal{S}_1$ is small, the difference between their complex dilatations should also be small. This motivates us to add a new term to the energy provided that $\mu_f$ is regular enough to have a $L^2$ weak derivative. We can also minimize
\begin{equation}
\label{eq:energy_measure_qc}
    E(f) = \mathcal{H}(f_*\gamma|\nu) + \alpha\norm{\mu_f}_{L^2}^2 + \beta \norm{\nabla\mu_f}_{L^2}^2
\end{equation}
where $\beta>0$ is prescribed. Both energy functional shall be minimized over all quasi-conformal maps from $\mathcal{S}_1$ to $\mathcal{S}_2$ with bounded complex dilatation. The mapping theorem allows us to parameterize all these maps by their Beltrami coefficients, which are only assumed to be bounded and complex measurable.

\subsection{A Flow-based Solver}
\label{sec:algorithm}
The optimization of the energy~\eqref{eq:energy_measure_qc} seems difficult to handle. To see it, notice that the relative entropy is integrated on the target surface $\mathcal{S}_2$ and involves the pushforward measure and the target measure. However, the Beltrami coefficient is defined on the initial surface $\mathcal{S}_1$ and corresponds to the map $f$ through the Beltrami equation. Luckily, each term of the energy~\eqref{eq:energy_measure_qc} has a gradient flow in some appropriate spaces. Utilizing this crucial structure, we propose a splitting method to minimize the energy. The basic idea is that, each time, we update the map $f$ in the direction of the gradient flow of one term of the energy~\eqref{eq:energy_measure_qc}. To illustrate our method, we need to introduce several operators.

As introduced, the gradient flow of the relative entropy is the Fokker-Planck equation. Let $f:\mathcal{S}_1\to\mathcal{S}_2$ be a quasiconformal map. Suppose $f_*\gamma$ has density $\rho_0$ and the target measure $\nu$ has density $u=e^{-V}$ for some function $V$ that is $C^1$ on $\mathcal{S}_2$. Taking $\rho_0$ as the initial measure, the Fokker-Planck equation
\begin{equation}
\label{eq:Fokker_Planck_2}
    \partial_t\rho_t - \nabla\cdot(\nabla\rho_t+\rho_t\nabla V) = 0\ \ \ \text{in }\R^2\times [0,\infty)
\end{equation}
gives a time-dependent density $\rho_t$ that is supposed to converge to $\nu$. A rigorous analysis of the convergence of the Fokker-Planck equation can be found in~\cite{ambrosio2008gradient}. To find the transport map, we apply the continuity equation
\begin{equation}
\label{eq:continuity}
    \partial_t \rho_t + \nabla\cdot(v_t\rho_t) = 0
\end{equation}
to obtain a time-dependent velocity field $v_t$. Notice that $v_t$ is determined by both the initial density $\rho_0$ and the target density $u$. Suppose that the target density is fixed. We then define
\begin{equation}
\label{eq:transport_t}
    T_t(\rho_0)(x) = x + \int_0^t v_s\,ds \ \ \ \text{for any } x\in\R^2.
\end{equation}
Note that $[T_t(\rho_0)]_*\rho_0=\rho_t$ for any $t>0$. Thus, with a prescribed $t>0$, the map $f_t:\mathcal{S}_1\to\mathcal{S}_2$ defined by 
\begin{equation}
    f_t(x) = T_t(\rho_0)(f(x))\ \ \ \text{for }x\in\mathcal{S}_1
\end{equation}
is supposed to reduce the relative entropy, where $\rho_0$ is the density of $f_*\gamma$. We cannot promise that $f_t$ is quasiconformal with bounded complex dilatation. In case it is not, we introduce a projection operator in $L^{\infty}(\C)$ via
\begin{equation}
\label{eq:projection_mu}
    P_k (\mu)(z) =
    \begin{cases}
        \mu(z) & if \ \ \abs{\mu(z)}\leq k \\
        \dfrac{k}{|\mu(z)|}\,\mu(z) & if \ \ |\mu(z)|>k
    \end{cases}
\end{equation}
where $k\in [0,1)$ is prescribed. Suppose $f_t$ is regular enough to be associated with a Beltrami coefficient $\mu$. $\tilde{\mu}=P_k(\mu)$ will give us a modified Beltrami coefficient. We can then solve the mapping problem~\eqref{eq:qc_energy_1} or\eqref{eq:qc_energy_2} to give a quasiconformal modification $\tilde{f}_t=S_{\Gamma}(\tilde{\mu})$, where $\Gamma$ is a landmark condition or a Dirichlet condition. 

The gradient flow of the $L^2$-norm in $L^2(\C)$ is trivial. Suppose a quasiconformal map $f:\mathcal{S}_1\to\mathcal{S}_2$ has a Beltrami coefficient $\mu$. For a prescribed $t>0$, $e^{-t}\mu$ clearly reduces the $L^2$-norm. $S_{\Gamma}(e^{-t}\mu)$ then outputs the updated quasiconformal map.

The Laplace equation is widely known to be the gradient flow of the Dirichlet energy in $L^2(\R^n)$. Given an initial Beltrami coefficient $\mu_0=\sigma_0 + i\tau_0$, we denote by $\sigma_t$ and $\tau_t$ the solution of the Laplace equation
\begin{equation}
\label{eq:laplace_mu}
    \partial_s\sigma_s = \Delta\sigma_s,\ \ \ \partial_s\tau_s = \Delta\tau_s,
\end{equation}
with initial values $\sigma_0$ and $\tau_0$ respectively.  We then define the operator $R_t:L^2(\Omega,\C)\to L^2(\Omega,\C)$ to smooth Beltrai coefficients, which is given by 
\begin{equation}
\label{eq:smooth_mu}
    \mu_t:=R_t(\mu_0)=\sigma_t+i\tau_t
\end{equation}
for any $t>0$. The output $\mu_t$ is a smoothed Beltrami coefficient with reduced Dirichlet energy and gives an updated quasiconformal map $S_{\Gamma}(\mu_t)$. 

Our algorithm for minimizing the energy~\ref{eq:energy_measure_qc} is motivated by the splitting algorithms in optimization. Briefly speaking, we alternate between optimizing for one component of the energy~\ref{eq:energy_measure_qc} to obtain an updated map $f$ each time and then for another. A more comprehensive formulation can be described using the operators $S_{\Gamma}$~(\ref{subsec:qc2d}), $T_t$~(\ref{eq:transport_t}), $P_k$~(\ref{eq:projection_mu}), and $R_t$~(\ref{eq:smooth_mu}) we have already defined. We start from an initial orientation-preserving bijective map $f^{(0)}$, which can be realized as a quasiconformal map or a harmonic map. Suppose the output of the $(k-1)$-th iteration is $f^{(k-1)}$. In the $k$-th iteration, we first compute the density $\rho^k$ of $(f^{(k-1)})_*\gamma$. The evolution of the Fokker-Planck equation over time $t_1$ gives us an updated density function which is supposed to be closer to the density $u$ of $\nu$. We then solve the continuity equation to get the associated transport map $T_{t_1}(\rho^k)$. The composition $g^k = T_{t_1}(\rho^k)\circ f^{(k-1)}$ thus reduces the relative entropy. After that, we compute the Beltrami coefficient $\mu_{g^k}$ of the map $g^k$ and apply the projection~\eqref{eq:projection_mu} to make it strictly less than some $0<k<1$ by the projection operator $P_k$. Next, we turn to the optimization of Beltrami coefficients. The gradient flows of the energies $\norm{\mu}_{L^2}^2$ and $\norm{\nabla\mu}_{L^2}^2$ gives us an updated Beltrami coefficient $\mu^k:=R_{t_3}(e^{-t_2}\mu_{g^k})$. Solving the Beltrami equation with respect to some landmark condition or boundary condition $\Gamma$ gives an updated map $f^{(k)}:=S_{\Gamma}(\mu^k)$. This is the output of the iteration $k$. We stop the iteration when the result is satisfactory enough or the energy does not decay. For better performance, we may define the time steps as $t_{k,1}$, $t_{k,2}$, and $t_{k,3}$, which are parameters depending on the iteration number $k$.

The relative difference between parameters $t_1$, $t_2$, and $t_3$ depends on whether we emphasize on the difference between the pushforward measure and the target one or the quantity and smoothness of the Beltrami coefficients. However, the sensitivity to time steps of the numerical solvers for the entropy term and the quasiconformal term are different. Indeed, our experiments shows that the time step of the Fokker Planck equation must be set small enough to ensure convergence. This requires us to set a smaller time scale when we numerically discretize the operators $T_t$ and $R_t$. We are going to discuss it in the next section.

\section{Numerical methods}
\label{sec:numerical}
To numerically compute a measure-preserving map, we shall see that it suffices to deal with the case that $\mathcal{S}_1$ is discretized as a triangulated surface while $S_2$ is not. Indeed, when $\mathcal{S}_1$ and $\mathcal{S}_2$ are both triangulated surfaces, we can choose a template domain $\Omega\in\R^2$ such as the unit circle with standard Lebesgue measure, and find piecewise linear measure-preserving maps $f_1:\mathcal{S}_1\to\Omega$ and $f_2:\mathcal{S}_2\to\Omega$. The desired map from $\mathcal{S}_1$ and $\mathcal{S}_2$ can then be approximated by a suitable piecewise linear interpolation of $f_2^{-1}\circ f_1$. 

Now, suppose $\mathcal{S}_1=(V,E,F)$ be a triangulated surface and $\mathcal{S}_2$ be a continuous surface such as the unit circle, a rectangular domain, the Poincaré disk, or some domain in $\R^2$ with a prescribed metric. Recall that the uniformization theorem states that any simply-connected Riemann surface is conformally equivalent to the unit disk, the complex plane, or the Riemann sphere. Since this article does not touch on diffusion on curved surfaces, we may suppose that $\mathcal{S}_2$ is embedded in $\R^2$. Our numerical solver intends to approximates the minimizer of energy~\ref{eq:energy_measure_qc} find a piecewise linear quasiconformal map. For surfaces with boundaries, the boundaries are usually fixed by prescribing a Dirichlet boundary condition.
\subsection{Discretization of transport maps}
Given an arbitrary triangulated surface $\mathcal{S}=(V,E,F)$, we discretize measures $\gamma\in\mathcal{P}_2^{ac}(\mathcal{S})$ whose densities are positive everywhere by surface measures whose densities are strictly positive and piecewise constant on each triangle face. Let $(ijk)\in F$ represent a triangle face with vertices $i$, $j$, and $k$. The feasible set of the density functions on can thus be written as
\begin{equation}
    \{\rho \in \R_+^{|F|\times 1}: \sum_{(ijk)\in F} \rho(ijk)\cdot \text{Area}(ijk)=1\}
\end{equation}
where $\rho(ijk)$ is the density of the measure on triangle face $(ijk)$. Let $\rho$ be such a density function of a probability measure $\gamma$ on $\mathcal{S}$. Suppose $f:\mathcal{S}\to\R^3$ is a piecewise linear map that is homeomorphic onto its image. Clearly, the density $\rho_f\in \R_+^{|F|\times 1}$ of the pushforward measure $f_*\gamma$ is given by
\begin{equation*}
    \rho_f(f(ijk)) = \rho(ijk)\cdot\dfrac{\text{Area}(ijk)}{\text{Area}(f(ijk))}
\end{equation*}
where $f(ijk)$ is the image of the triangle $(ijk)$ under $f$. Notice that $f_*\gamma$ is a surface probability measure on $f(\mathcal{S})$ and has a piecewise constant density.

Our goal is to approximate the transport map $T_t$~\ref{eq:transport_t} from a piecewise constant measure $\gamma$ to a reference measure $\nu$ with density $e^{-V}$. It suffices to approximate the values of the vector fields $v_t$ at each vertex in $V$. Integrating the vector field $v_t$ then gives us a time-dependent piecewise linear map. This goal can be accomplished if we can discretize the Fokker-Planck equation and the continuity equation. But we shall see that the Fokker-Planck equation is a second order equation while $\gamma$ and $f_*\gamma$ do not have enough regularity to produce a second-order derivative as they have piecewise constant density. Nevertheless, what we desire is the transport map instead of the evolution of density. Comparing the Fokker-Planck equation~\eqref{eq:Fokker_Planck_2} and the continuity equation~\eqref{eq:continuity}, we get
\begin{equation}
    v_t = -\dfrac{\nabla\rho_t}{\rho_t} - \nabla V
\end{equation}
Suppose that we already have an explicit formula for the drift $V$. Then, once we have an approximation of $\rho_t$ and $\nabla\rho_t$ at each vertex, we get an approximation of $v_t$, hence the transport map. Inspired by the work~\cite{bessemoulin2012finite}, we apply a finite-volume method to achieve it. Suppose $\rho$ is the density of $f_*\gamma$ for some $f:\mathcal{S}_1\to\R^2$. For an arbitrary vertex $i\in V$, we write $i\sim (ijk)$ if $i$ is a vertex of face (ijk). Let $\rho_i$ denote the approximated density at vertex $i$. We approximate it by averaging over all neighbouring faces of $i$. That is,
\begin{equation}
    \rho_i = \sum_{i\sim (ijk)}\text{Area}(ijk)\cdot\rho(ijk) \bigg/ \sum_{i\sim (ijk)}\text{Area}(ijk)
    \label{eq:density_vertex}
\end{equation}
With approximated $\rho_i$ at each vertex $i$, we extend it to a piecewise linear function $\tilde{\rho}$ on $f(\mathcal{S}_1)$. The gradient of $\tilde{\rho}$ is piecewise constant on each face and we denote the value on face $(ijk)$ by $\nabla\tilde{\rho}(ijk)$. We then approximate the gradient of $\rho$ at vertex $i$ by
\begin{equation}
    (\nabla\rho)_i = \sum_{i\sim (ijk)}\text{Area}(ijk)\cdot\nabla\tilde{\rho}(ijk) \bigg/ \sum_{i\sim (ijk)}\text{Area}(ijk)
    \label{eq:nabla_density_vertex}
\end{equation}
With the approximation of the vector, we give an explicit Euler scheme to discretize the transport map $T_t$ defined by~\eqref{eq:transport_t}. Suppose $\rho_0$ is the density of $(f_0)_*\gamma$. Given $t>0$, we fix a smaller time scale $\tau$ such that $t=n\tau$. For each vertex $w_i$ and $0\leq k \leq n-1$, we define
\begin{equation}
    f_{(k+1)\tau}(w_i) = f_{k\tau}(w_i) + \tau\cdot v_{k\tau}^i
\end{equation}
where $v_{k\tau}^i$ is the value at vertex $w_i$ of the velocity field $v_{k\tau}$ associated with $(f_{k\tau})_*\gamma$. The discrete transport map $T_t(\rho_0)$ is characterized by
\begin{equation}
    [T_t(\rho_0)\circ f_0] (w_i) = f_{t}(w_i)
\end{equation}
for any appropriate map $f_0$ and vertex $w_i\in V$. To evaluate the performance of a transport map, we need to numerically approximate the relative entropy~\eqref{eq:relative_entropy}. Let $\rho$ be the density of $f_*\gamma$. Since $\rho$ is piecewise constant and $\nu$ has density $e^{-V}$, we have
\begin{align}
    \mathcal{H}(f_*\gamma|\nu) &= \int_{f(\mathcal{S}_1)}\rho\log(\rho) + \rho V \, dx \\
    &= \sum_{(ijk)\in F}\int_{f(ijk)} \rho(ijk)\log(\rho(ijk)) + \rho(ijk)V(x)\, dx \\
    &= \sum_{(ijk)\in F} \text{Area}(f(ijk))\cdot\rho(ijk)\log(\rho(ijk)) \\
    & \ \ \ \ \ + \sum_{(ijk)\in F}\rho(ijk)\int_{f(ijk)}V(x)\,dx
\end{align}
To numerically compute this integral, we only need to approximate $V(x)|_{(ijk)}$ for each triangle face $(ijk)$. If the area of the triangle faces are smaller enough and $V$ is continuous, we can simply approximate $V|_{(ijk)}$ by a constant for each $(ijk)$. Another possible choice is to interpolate $V$ by a piecewise linear function with the values $V(w_i)$ for all vertex $w_i$. Suppose we approximate $V$ by $\tilde{V}$ to compute the approximated entropy $\tilde{\mathcal{H}}$. The difference between $\tilde{\mathcal{H}}$ and the ground truth $\mathcal{H}$ satisfies
\begin{equation}
    |\mathcal{H}-\tilde{\mathcal{H}}| \leq \sum_{(ijk)\in F}\rho_{ijk}\int_{f(ijk)}|V(x)-\tilde{V}(x)|\,dx
\end{equation}
which is clearly controllable if the mesh has small size and is regular enough.

\subsection{Discretization of quasi-conformal maps}
As mentioned, we approximate the homeomorphism from a triangulated surface to a target surface by an orientation-preserving piecewise linear map. Since the differentials $f_z$ and $f_{\overline{z}}$ of a piecewise linear map are piecewise constant, the Beltrami coefficient $\mu=f_z/f_{\overline{z}}$ of it is also piecewise constant on each triangle face. Given an orientation-preserving piecewise linear map, it is straightforward to compute its Beltrami coefficient. But what is more subtle is the converse problem. Given a triangulated surface $\mathcal{S}\subset\C$ and a piecewise constant Beltrami coefficient $\mu$, the mapping theorem~\ref{thm:measurable_riemann} guarantees the existence of a quasi-conformal map that satisfies the Beltrami equation given by $\mu$. However, this map is not guaranteed to be piecewise linear. Even if we only fix two points, it is often impossible to find a piecewise linear map whose Beltrami coefficient equals the given one almost everywhere. But still, we can find a minimizer of the energies \ref{eq:qc_energy_1} or \ref{eq:qc_energy_2} over all piecewise linear maps under some landmark or boundary condition. We call the minimizer a least-square quasiconformal map. So, the key point of our numerical method is to give a discretized form of energy~\eqref{eq:qc_energy_1} and~\eqref{eq:qc_energy_2}. We are going to illustrate it in detail.

Let $\mathcal{S}=(V,E,F)$ be a triangulated surface and $f$ be a piecewise linear map. We enumerate all the vertices of $\mathcal{S}$ as $V:=\{w_1,w_2,\cdots,w_n\}$ and denote their images under $f$ by $\{f(w_1),f(w_2),\cdots,f(w_n)\}=\{u_1+iv_1,u_2+iv_2,\cdots,u_n+iv_n\}$, where all of $u_i$ and $v_i$ are real. Let vectors $u=(u_1\ u_2 \cdots u_n)^t$ and $v=(v_1\ v_2 \cdots v_n)^t$ denote the $x$ and $y$ coordinates of the image of the vertices under $f$. We then discretize the Dirichlet type energies $E_A(u)=\frac{1}{2}\int_{\mathcal{S}}||A^{1/2}\nabla u||^2$ and $E_A(v)=\frac{1}{2}\int_{\mathcal{S}}||A^{1/2}\nabla v||^2$ appearing in~\eqref{eq:qc_energy_1}. Let $T\in F$ be an arbitrary triangle face with vertices $w_0^T,w_1^T,w_2^T \in \R^{2\times 1}$. Suppose the image of $T$ under $f$ is the triangle with vertices $\{f(w_0^T),f(w_1^T),f(w_2^T)\}=\{u_0^T+iv_0^T,u_1^T+iv_1^T,u_2^T+iv_2^T\}$. Since $f$ is linear on $T$, we can express the gradient of $f$ on $T$ as linear functions of $\{u_0^T,u_1^T,u_2^T\}$ and $\{v_0^T,v_1^T,v_2^T\}$
\begin{equation}
    \nabla u|_T = \dfrac{1}{2\text{Area}(T)}\begin{pmatrix}0&-1\\1&0\end{pmatrix}\sum_{i=0}^2u_i(w_{2+i}^{T}-w_{1+i}^{T}),
\end{equation}
and
\begin{equation}
    \nabla v|_T = \dfrac{1}{2\text{Area}(T)}\begin{pmatrix}0&-1\\1&0\end{pmatrix}\sum_{i=0}^2v_i(w_{2+i}^{T}-w_{1+i}^{T}).
\end{equation}
where the sub-indices of $w^T_{2+i}$ and $w^T_{1+i}$ are modulo $3$. Since the Beltrami coefficient $\mu$ is constant on $T$, the symmetric dilation matrix $A$ define by~\eqref{eqt:dilation_matrix} is constant on $T$. By canceling out $\text{Area}(T)$, the integrals $\int_T ||A^{1/2}\nabla u||^2$ and $\int_T ||A^{1/2}\nabla v||^2$ are quadratic functions of $\{u_0^T,u_1^T,u_2^T\}$ and $\{v_0^T,v_1^T,v_2^T\}$ respectively. By summing over all faces $T\in F$, we obtain two quadratic forms
\begin{equation}
    E_A(u) = u^t L_{\mu} u \ \ \text{and}\ \ E_A(v) = v^t L_{\mu} v,
\end{equation}
where the symmetric matrix $L_{\mu}$ depending on $\mu$ is referred as the \emph{generalized Laplacian matrix}. When $\mu$ is identically $0$, $L_{\mu}$ coincides with the standard Laplacian matrix on triangulated surfaces~\cite{pinkall1993computing}. We also need to discretize the area functional $\mathcal{A}(u,v)$. Since $f$ is an orientation-preserving homeomorphism,
\begin{equation}
    \mathcal{A}(u,v) = \int_{\mathcal{S}} u_x v_y - v_x u_y = \begin{pmatrix} u^t & v^t\end{pmatrix} \begin{pmatrix} 0 & U \\ -U & 0 \end{pmatrix} \begin{pmatrix} u \\ v \end{pmatrix}
    \label{eqt:discretized_area}
\end{equation}
where $U$ is a skew-symmetric matrix. Define the symmetric matrix
\begin{equation}
    M = \begin{pmatrix} \mathcal{L}_{\mu} & 0 \\ 0 & \mathcal{L}_{\mu} \end{pmatrix} - \begin{pmatrix} 0 & U \\ -U & 0 \end{pmatrix}.
    \label{eqt:M}
\end{equation}
Thus, the energy~\eqref{eq:qc_energy_1} can be discretized as
\begin{equation}
     E_{\text{QC}}^{\mu}(u,v) = \begin{pmatrix} u^t & v^t\end{pmatrix} M \begin{pmatrix} u \\ v \end{pmatrix}.
\end{equation}
Ww remark that when the Beltrami coefficient $\mu$ is identically zero, $E_{\text{QC}}$ coincides with the energy appearing in the least-square conformal map~\cite{desbrun2002intrinsic}. We cannot get a quasiconformal map by directly optimizing this energy because it has infinitely many minimizers. To get a valid solution, we need to fix some points. Indeed, fixing two points is enough to guaranty a unique solution. This can be seen from the following theorem.

\begin{theorem}
    \label{thm:numerical_qc}
    Suppose the Beltrami coefficient $\mu$ is strictly less than $1$, and the triangle mesh $\mathcal{S}$ is connected and has no dangling points (i.e. there are no triangles that share a common vertex but no common edge). Let $I_{pin}$ be the indices of points to be pinned with cardinality $|I_{pin}|\geq 2$, $I_{free}$ be the indices of the free points, and M be the matrix defined by the formula~\ref{eqt:M}. Then, the $2|I_{free}|\times2|I_{free}|$ sub-matrix $M_{free}$ of $M$ indexed by the free points has full rank.
\end{theorem}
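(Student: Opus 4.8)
The plan is to recast the full-rank assertion as a statement about the null space of the discretized quasiconformal energy, establish positive semidefiniteness of that quadratic form, identify its kernel with the piecewise linear maps that solve the Beltrami equation face by face, and then run a discrete unique-continuation argument across the mesh.

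The starting observation is that, since $|\mu_T|<1$ on each face, the symmetric dilation matrix $A$ of~\eqref{eqt:dilation_matrix} is positive definite with $\det A=1$: writing $s=|\mu|^2$, a direct expansion gives $\det A=\big((s+1)^2-4s\big)/(1-s)^2=1$, while the diagonal entries are manifestly positive. Hence $u^\top\mathcal L_\mu u=\tfrac12\int_{\mathcal S}\|A^{1/2}\nabla u\|^2\ge 0$, so $\mathcal L_\mu\succeq 0$ (with kernel spanned by the all-ones vector, by connectedness). For the full matrix $M$ of~\eqref{eqt:M}, note that on each face $T$ the differential $B:=Df|_T$ is a constant $2\times 2$ matrix and the integrand of $E_{\text{QC}}^\mu$ over $T$ is $\tfrac12\tr(BAB^\top)-\det B$; the arithmetic--geometric mean inequality applied to the two eigenvalues of the positive semidefinite matrix $BAB^\top$ gives $\tfrac12\tr(BAB^\top)\ge\sqrt{\det(BAB^\top)}=\sqrt{(\det B)^2\det A}=|\det B|\ge\det B$, with equality throughout precisely when $\det B\ge 0$ and $BAB^\top$ is a scalar multiple of the identity, i.e.\ when $(Df)^\top Df=J_f A^{-1}$ on $T$. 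Summing over faces yields $M\succeq 0$. Consequently the principal submatrix $M_{\text{free}}$ is positive definite --- equivalently, of full rank --- if and only if no nonzero $(u,v)$ supported on the free indices lies in $\ker M$, so it suffices to prove this.

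By the equivalence recalled after~\eqref{eqt:dilation_matrix}, the equality condition above says that on every face $T$ the piecewise linear map $f=u+iv$ has $J_f\ge 0$ and satisfies $\partial_{\bar z}(f|_T)=\mu_T\,\partial_z(f|_T)$ (the degenerate case being $Df|_T=0$, as $A\succ 0$). Writing $f|_T(z)=p_Tz+q_T\bar z+r_T$, this is the single complex equation $q_T=\mu_Tp_T$, which is linear in the three vertex values of $f$ on $T$; solving the associated $3\times 3$ linear system shows that the coefficient of any one vertex value in $q_T-\mu_Tp_T$ is a nonzero multiple of $e+\mu_T\bar e$, where $e\ne 0$ is the vector of the opposite edge of $T$. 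Since $|e+\mu_T\bar e|\ge(1-|\mu_T|)|e|>0$, this coefficient never vanishes. Two consequences follow: (i) the value of $f$ at one vertex of $T$ is uniquely determined by its values at the other two together with the face constraint; and, specializing to zero values, (ii) if $f\in\ker M$ vanishes at two vertices of a face $T$, then $f\equiv 0$ on $T$. This is the one place the hypothesis $|\mu|<1$ is genuinely used, and it is the heart of the argument.

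To globalize, observe by (i) that an element of $\ker M$ is determined on a face $T_0$ by its values at two of $T_0$'s vertices, and then across every edge-adjacent face --- which shares two vertices with the already-determined region --- the remaining vertex value is forced; since $\mathcal S$ is connected and has no dangling points, the dual graph under edge-sharing adjacency is connected (a shared vertex has connected link, turning a vertex path into a path of edge-adjacent faces), so every face is reached and $\dim_{\C}\ker M\le 2$. The constant maps always lie in $\ker M$, so $\ker M$ is spanned by the constants and at most one non-constant map $h$; moreover, the set of faces on which a non-constant $f\in\ker M$ is degenerate is both open and closed (if an edge separated a degenerate face from a non-degenerate one, the two endpoints would give $f$ two equal vertex values on the non-degenerate face, contradicting (ii)), hence empty, so $h$ has $J_h>0$ everywhere. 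Now let $f\in\ker M$ vanish at two distinct pinned vertices (possible since $|I_{pin}|\ge 2$): if $f$ is constant it is already identically $0$, and if the two vertices lie in a common face --- as happens, for instance, for consecutive vertices of a pinned boundary cycle in the Dirichlet setting --- then (ii) and the propagation above force $f\equiv 0$ at once. The remaining case reduces to showing that the evaluation $\ker M\to\C^{I_{pin}}$ is injective, i.e.\ that the $J>0$ map $h$ cannot take the same value at two distinct vertices; I expect this last step --- ruling out a genuinely multi-sheeted immersion by a global degree or monotonicity argument, rather than a face-by-face one --- to be the main obstacle, and the only ingredient beyond positivity and local continuation.
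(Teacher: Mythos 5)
Your proposal takes a genuinely different route from the paper's proof, which is essentially a citation: the paper invokes the incremental gluing/joining construction of~\cite{levy2023least} and its quasiconformal extension in~\cite{qiu2019computing} and asserts that each incremental step preserves full rank. You instead analyze $\ker M$ directly, and everything up to your final step is correct and arguably cleaner: $A\succ 0$ with $\det A=1$ when $|\mu_T|<1$; the face-wise inequality $\tfrac12\tr(BAB^{t})\ge \det B$ with equality exactly when the affine piece $p_Tz+q_T\bar z+r_T$ satisfies $q_T=\mu_T p_T$; the coefficient of the vertex value $f_i$ in $q_T-\mu_T p_T$ being a nonzero multiple of $e_i+\mu_T\bar e_i$, with $|e_i+\mu_T\bar e_i|\ge(1-|\mu_T|)|e_i|>0$; propagation through the connected edge-adjacency dual graph giving $\dim_{\C}\ker M\le 2$, so $\ker M=\C\cdot 1\oplus\C\cdot h$ for at most one nonconstant $h$ with $J_h>0$ on every face; and the observation that $M_{\text{free}}$ (a principal submatrix of a positive semidefinite $M$) is singular iff some nonzero kernel element vanishes at every pinned vertex. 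This correctly reduces the theorem to the claim that $h(w_1)\ne h(w_2)$ for the two pinned vertices.

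That last claim, which you flag as the remaining obstacle, is a genuine gap, and it cannot be closed by a degree or monotonicity argument, because in the stated generality it is false. A nonconstant element of $\ker M$ is precisely a developing map of the similarity-type structure prescribed by $\mu$: orientation-preserving and nondegenerate on each face, but not necessarily globally injective. Take any locally injective orientation-preserving piecewise linear $h$ on a connected mesh with connected vertex links whose image overlaps itself (e.g.\ a long triangulated strip curled through more than a full turn, so that $h(w_1)=h(w_2)$ for two distinct vertices), and let $\mu_T$ be the Beltrami coefficient of $h$ on each face, so $|\mu_T|<1$ automatically. Then $h-h(w_1)$ is a nonzero element of $\ker M$ vanishing at both $w_1$ and $w_2$, and pinning exactly those two vertices makes $M_{\text{free}}$ singular. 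In the conformal case $\mu\equiv 0$ your own propagation shows $\ker M=\{az+b\}$, whose nonconstant elements are injective, which is why the LSCM version of the theorem survives; the quasiconformal case needs an additional ingredient. To finish in the spirit of the paper you would have to carry out the inductive gluing/joining argument it cites and locate what it uses beyond positivity and local continuation, or else add a hypothesis that forces the developing map to separate the pinned points --- for instance that the pinned vertices share a face, which your step (ii) already handles and which covers the Dirichlet boundary setting used in the experiments.
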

\begin{proof}
    The conformal version of this theorem is stated and proved in~\cite{levy2023least}. Following the main ideas, the work~\cite{qiu2019computing} generalize the theorem to the quasiconformal case and give a similar proof. Here, we only sketch the basic ideas. The key observation is that a triangle mesh satisfying our assumption can be constructed incrementally using two operations. The first is gluing, which adds a new vertex and connect it to two neighboring vertices. The second is joining, which joins two existing vertices. One then proceeds by proving the incremental construction preserves the full rank property. Following the arguments in~\cite{levy2023least}, we eventually get a full rank matrix, which is identical to $M$.
\end{proof}
No matter whether we fix $2$ points or prescribe a Dirichlet boundary condition, this theorem guaranties that there is a unique least-squares quasiconformal map. To preserve the numerical stability, we do not arbitrarily choose points when we need to pin two. Instead, we choose two points that are far from each other in $\mathcal{S}$ and map them to the origin and $(1,0)$. We make a final remark that the input of the quasiconformal solver must be a triangle surface embedded in $\R^2$. When we have a surface embedded in $\R^3$, we need to map it to $\R^2$ first using the free boundary conformal map~\cite{desbrun2002intrinsic}.

We now discuss the smoothing operators introduced in the previous section to minimize the energy~\eqref{eq:energy_measure_qc}. The gradient flow of the functional $\norm{\mu}_{L^2}^2$ has an explicit formula. So, if we fix $t>0$ and let $\mu\in\C^{|F|\times 1}$ denote a piecewise constant Beltrami coefficient on $\mathcal{S}$, then $e^{-t}\mu$ will give us an updated Beltrami coefficient with reduced $L^2$ norm. However, we need to be more careful when discretizing the gradient flow of $\norm{\nabla\mu}_{L^2}^2$ because the regularity of $\mu$ is only piecewise constant on each face. To tackle this problem, we consider utilizing the discrete Laplace-Beltrami operator defined on triangulated surfaces. But another problem arises that the Beltrami coefficient $\mu$ is defined on each face instead of each vertex. This motivates us to consider the dual triangulation of $(V,E,F)$ whose vertices are exactly the faces of the primal one. Suppose $g:F\to\R$ is an arbitrary function defined on all faces. For an arbitrary face $f\in F$, the dual Laplacian $\Delta^*$ of $g$ at $f$ is given by
\begin{equation}
    \Delta^* g(f) = \dfrac{1}{\text{Area}(f)}\sum_{e\in\partial f}\omega_e(g(f')-g(f))
\end{equation}
where $f'$ is the neighbouring face across the edge $e$, and positive numbers $\omega_e$ are the edge weights that can be defined by the famous cotangent weight~\cite{pinkall1993computing}. With the dual Laplacian, we discretize the smoothing operator characterized by~\eqref{eq:laplace_mu} by an explicit Euler scheme. First we choose a smaller time scale $\tau$ such that $t=n\tau$. Given an arbitrary Beltrami coefficient $\mu:F\to\C$, we take it as $\mu_0$. For each $0\leq k\leq n-1$, we define
\begin{equation}
    \mu_{(k+1)\tau} = \mu_{k\tau} + \tau\Delta^*\mu_{k\tau}
\end{equation}
where $\Delta^*$ acts on the real part and imaginary part of $\mu_{k\tau}$ separately. We thus define $R_t(\mu)=\mu_{n\tau}$, which gives a smoothed Beltrami coefficient.

\subsection{The initial conformal structure}
In section~\ref{sec:formulation}, we mention that we need to parameterize a surface $\mathcal{S}\in\R^3$ to $\R^2$ first in order to make the Beltrami coefficients well-defined. One should notice that this parameterization need not be the initial map $f_0$ for minimizing the energy~\ref{eq:energy_measure_qc}. Also, our numerical algorithm for quasiconformal maps works only for triangulated surfaces in $\R^2$. One approach to parameterizing a surface is to compute the least-square conformal map by pinning two points. However, this method may not output a good parameterization under the following two cases. The first is that the surface $\mathcal{S}$ is highly curved. That is, the discrete curvature at some part of the surface is very large. The second case is that the mesh structure of the triangulated surface $\mathcal{S}$ is not good enough. As a result, we need to propose some methods to ensure a parameterization with a good mesh quality, which we also call a good initial conformal structure. 

\begin{figure}
\begin{subfigure}[t]{.42\textwidth}
\centering
\includegraphics[width=.9\linewidth]{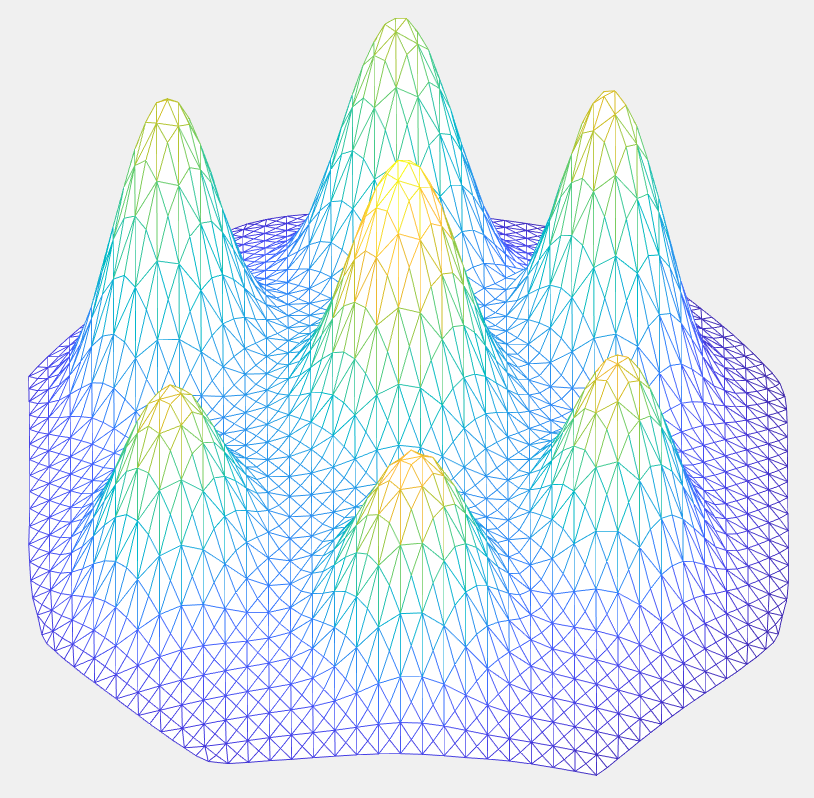}
\caption{A highly curved surface}
\label{fig:peaks_original}
\end{subfigure}%
\hfill
\begin{subfigure}[t]{.42\textwidth}
\centering
\includegraphics[width=.9\linewidth]{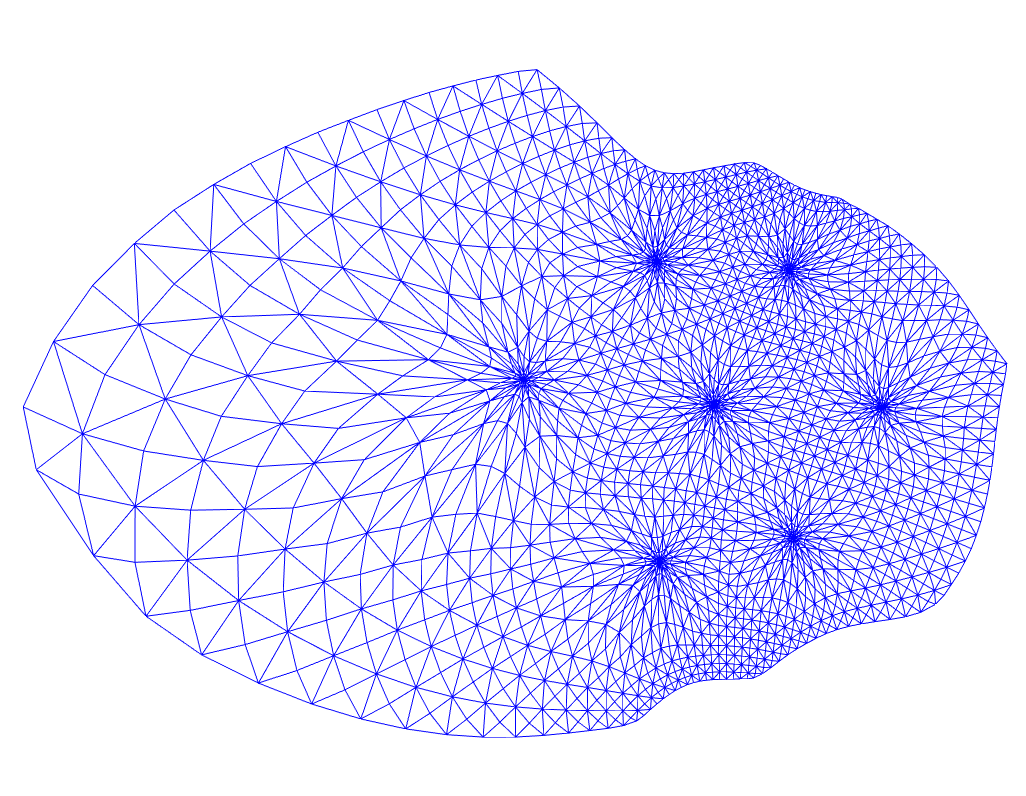}
\caption{The initial parameterization}
\label{fig:peaks_dncp}
\end{subfigure}%
\\
\begin{subfigure}[t]{.42\textwidth}
\centering
\includegraphics[width=.9\linewidth]{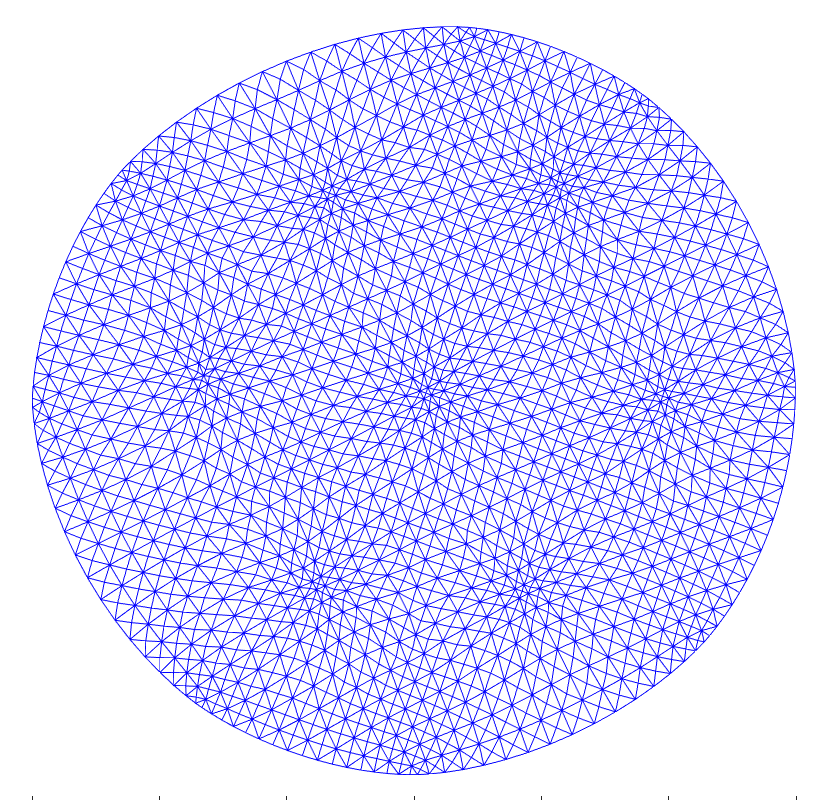}
\caption{The one given by a flat metric}
\label{fig:peaks_ricci}
\end{subfigure}%
\hfill
\begin{subfigure}[t]{.42\textwidth}
\centering
\includegraphics[width=.9\linewidth]{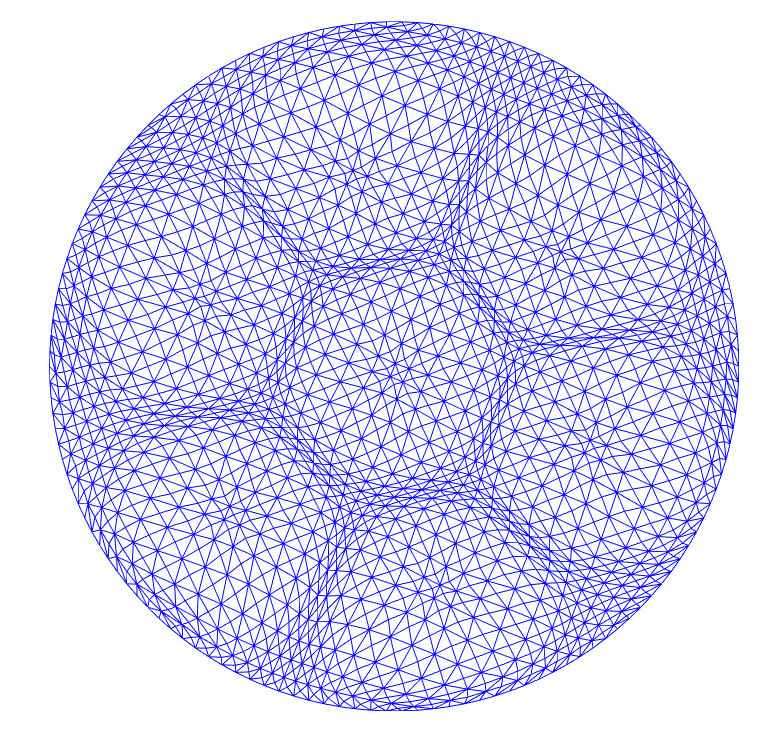}
\caption{The final result}
\label{fig:peaks_final}
\end{subfigure}%
\end{figure}

One method is to apply the discrete Ricci flow algorithm~\cite{jin2008discrete} to output a flat discrete metric on $\mathcal{S}$. A discrete metric is a function $l:E\to\R^+$ from the edges of $\mathcal{S}$ to positive real numbers. A discrete metric is called flat if the discrete Gaussian curvature is zero everywhere. By outputting a flat metric, we can remove the negative effects caused by the large curvature. For example, Fig.~\ref{fig:peaks_original} shows a triangulated surface with intense curvature. Fig.~\ref{fig:peaks_dncp} shows the parameterization given by the free boundary conformal map, whose quality is not good enough for us to solve the optimization problem. Fig.~\ref{fig:peaks_ricci} shows the parameterization given by a flat metric. Taking use of it, we compute an area-preserving parameterization, which is shown in Fig.~\ref{fig:peaks_final}. 

\begin{figure}
\begin{subfigure}[t]{.45\textwidth}
\centering
\includegraphics[width=.9\linewidth]{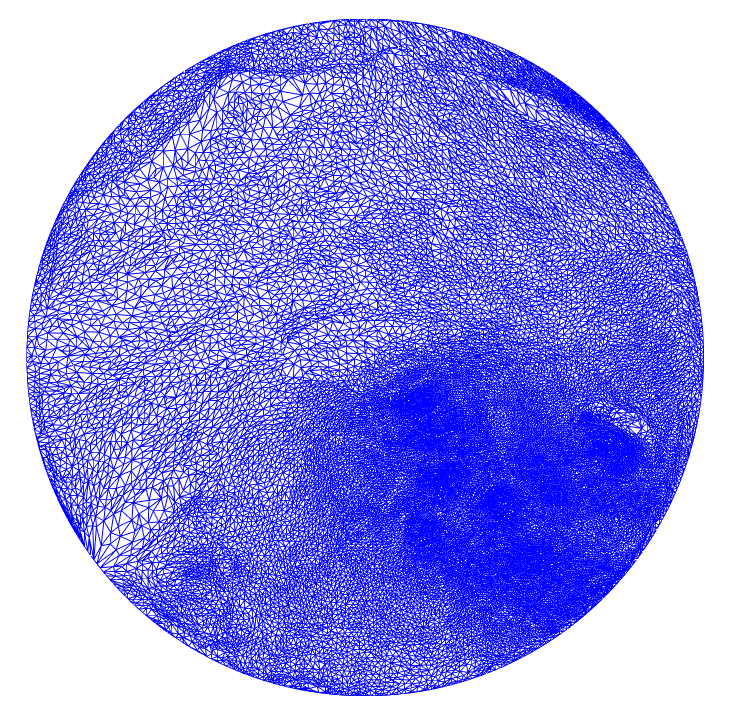}
\caption{The original mesh structure}
\label{fig:original_mesh}
\end{subfigure}%
\hfill
\begin{subfigure}[t]{.45\textwidth}
\centering
\includegraphics[width=.9\linewidth]{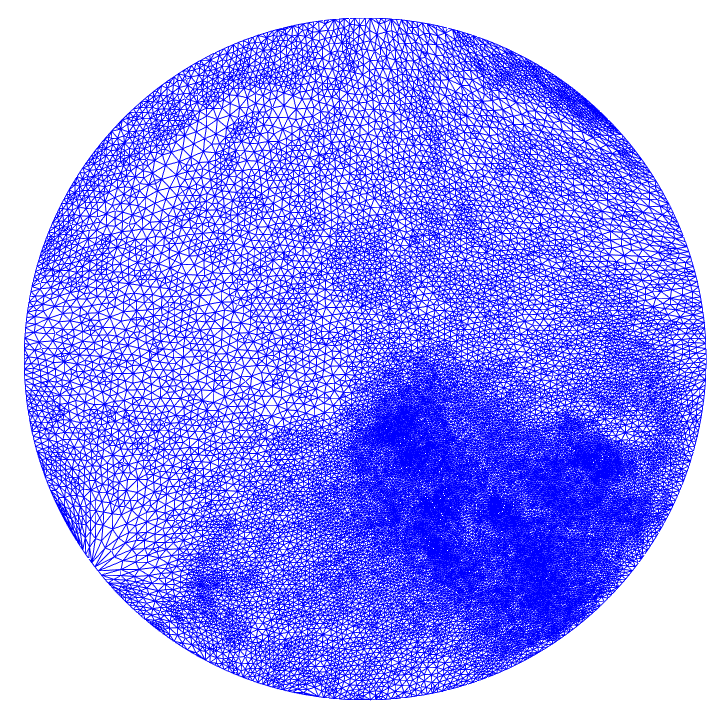}
\caption{The updated mesh structure}
\label{fig:updated_mesh}
\end{subfigure}%
\end{figure}

Another method is to apply the quasiconformal theory. Suppose we have a triangle mesh $(V,E,F)$ embedded in $\R^2$. We hope to comppute a Beltrami coefficient $\mu$ such that our Beltrami solver with this $\mu$ can output a mesh with better quality. Usually, a mesh structure is considered to be good if most triangles are close to an equilateral triangle. For each face $(ijk)\in F$, we can compute a linear map such that this face is mapped to an equilateral triangle by this map. The gradient of this map gives us a unique Beltrami coefficient $\mu$ defined on $(ijk)$. Completing this computation for each triangle face gives us a Beltrami coefficient that is piecewise constant on each face. Solving the optimization problem~\ref{eq:qc_energy_1} with respect to this $\mu$ gives us a least-square quasiconformal map such that most triangles faces are closer to equilateral triangles. Figure~\ref{fig:original_mesh} and Figure~\ref{fig:updated_mesh} demonstrate the effectiveness of this method. while the left shows a triangle mesh with a bad mesh structure, the right shows an updated one with much better quality.

\subsection{The genus-1 case}
The numerical algorithm for genus-0 surfaces is clearly presented by previous sections. In addition, our algorithm also works for closed surfaces of genus-1 under some special processing of them. As a direct consequence of the uniformization theorem, the universal cover of a genus-1 closed Riemann surface is the complex plane. In particular, their fundamental group is isomorphic to the free abelian group $G$ generated by two vectors in $\R^2$ that are not parallel to each other. So, the target surface can be chosen as a flat torus $\mathcal{T}$ that can be expressed as $\R^2/G$, such as the flat torus $\R^2/\mathbb{Z}^2$.

\begin{figure}
\begin{subfigure}[t]{.46\textwidth}
\centering
\includegraphics[width=.9\linewidth]{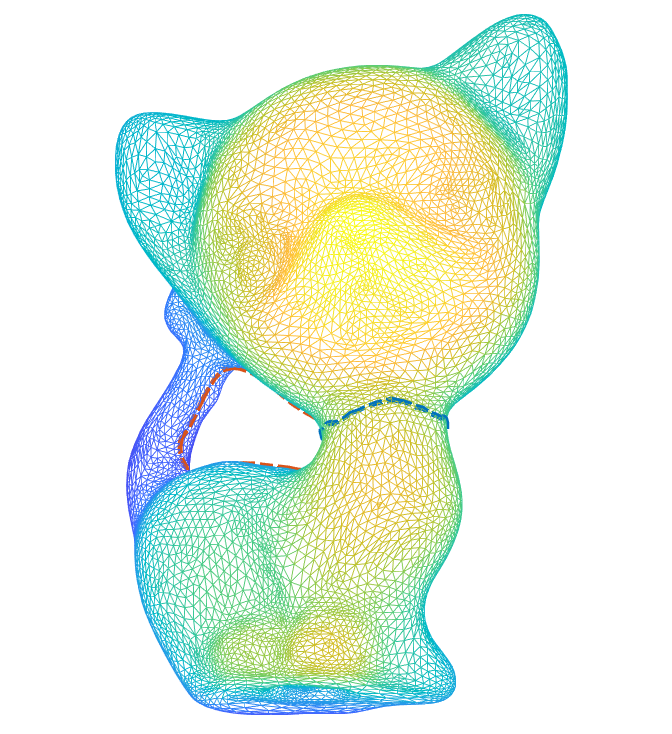}
\caption{A genus-1 surface with cuts}
\label{fig:g1_original}
\end{subfigure}%
\begin{subfigure}[t]{.54\textwidth}
\centering
\includegraphics[width=.9\linewidth]{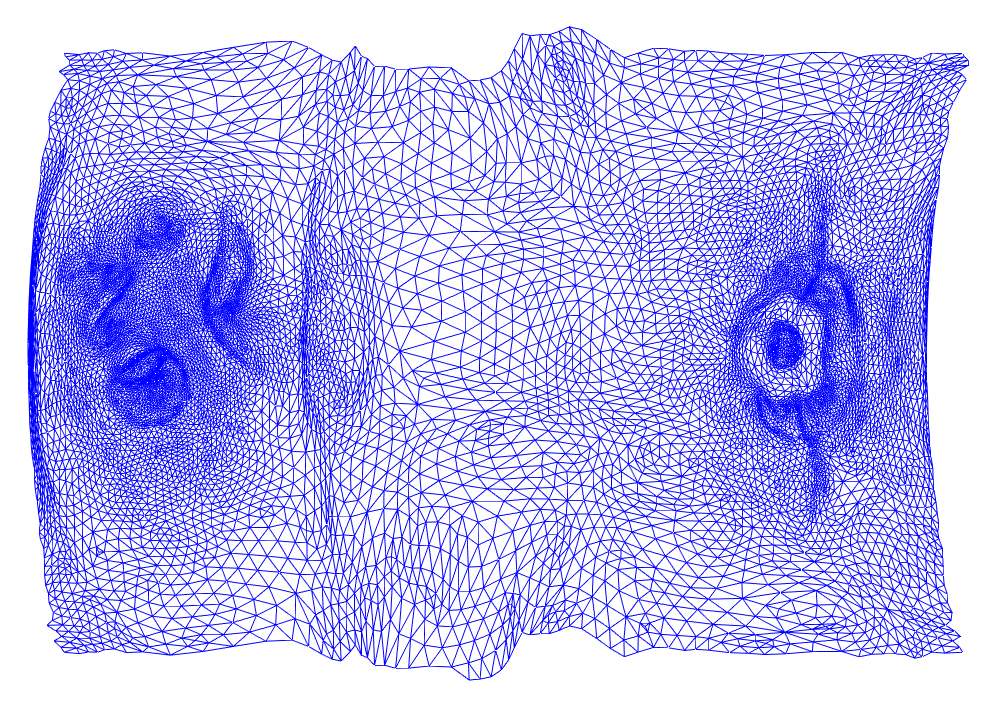}
\caption{The final map}
\label{fig:g1_final}
\end{subfigure}%
\end{figure}

However, given a triangulated torus $\mathcal{S}$ represented by $(V,E,F)$, we cannot directly embed in $\R^2$. To achieve it, we need to cut it open along a homology basis. We denote the open surface $\tilde{\mathcal{S}}$ obtained by $(\tilde{V},\tilde{E},F)$. Compared to $(V,E,F)$, $(\tilde{V},\tilde{E},F)$ has some duplicate vertices and edges. Let $\pi_1:\tilde{V}\to V$ denote the projection from $\tilde{\mathcal{S}}$ to $\mathcal{S}$ and $\pi_2$ denote the projection from the fundamental domain in $\R^2$ to the torus $\mathcal{T}$. The map $f:\mathcal{S}\to\mathcal{T}$ can then be represented by a map $\tilde{f}$ from $\tilde{\mathcal{S}}$ to a rectangular domain, under the constraint that 
\begin{equation}
    \pi_2(\tilde{f}(v_1)) = \pi_2(\tilde{f}(v_2))\ \ \text{if  }\pi_1(v_1) = \pi_1(v_2)
\end{equation}
The initial map $\tilde{f}_0$ can be found by a harmonic map with appropriate Dirichlet boundary condition. To find the desired evolution $\tilde{f}_t$, we need to take extra care. The reason is that a boundary vertex $i$ of $(\tilde{V},\tilde{E},F)$ corresponds to an interior vertex in $(V,E,F)$. So, the boundary vertices of $(V,E,F)$ should be allowed to move freely. Besides, the number of faces connected to $i$ in $(V,E,F)$ is less than the number of faces connected to it in the universal cover. So, when computing the gradient~\ref{eq:density_vertex} and the density gradient~\ref{eq:nabla_density_vertex} for this $i$, we need to count contributions from all faces connected to it in the universal cover. Fig.~\ref{fig:g1_original} shows a genus-1 surface together with a homology basis of it. We cut the surface along the basis and map it to a flat torus. Fig.~\ref{fig:g1_final} shows the final result.

\section{Experiments}
\subsection{The necessity of the quasiconformal term}
One may ask whether the quasiconformal term in~\ref{eq:energy_measure_qc} should be kept when we only need to compute a measure-preserving map. Actually, as we will see in the following example, the quasiconformal term takes on the effect of regularizing the mesh quality when the initialization map is not satisfactory enough. In the example, we set the density of the initial measure as a constant and the density of the target measure of $e^{-2|x|^2+c}$ for some $c$. Fig.~\ref{fig:e1_original} shows the initial mesh. Fig.~\ref{fig:e1_initial} shows a bad initialization map that we intentionally choose for the experiment.

\begin{figure}
\begin{subfigure}[t]{.42\textwidth}
\centering
\includegraphics[width=.9\linewidth]{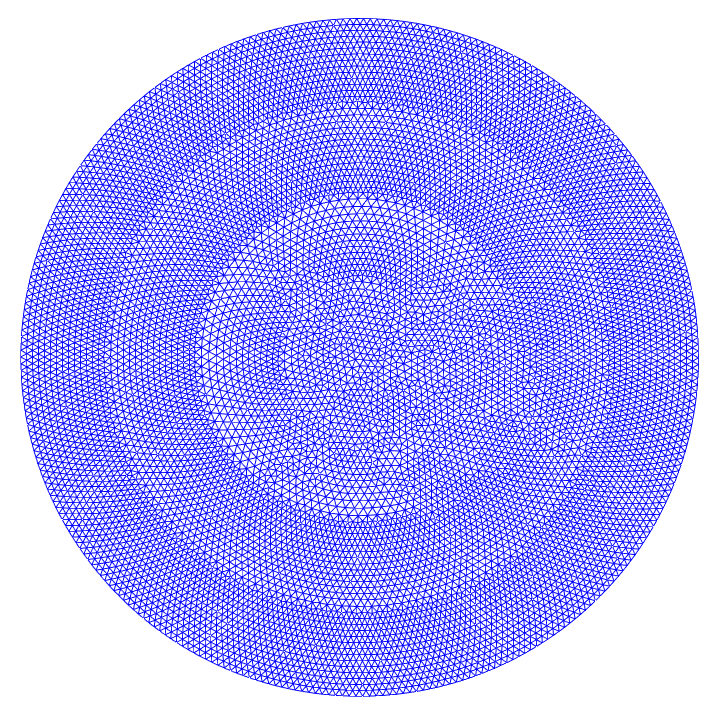}
\caption{The original mesh}
\label{fig:e1_original}
\end{subfigure}%
\hfill
\begin{subfigure}[t]{.42\textwidth}
\centering
\includegraphics[width=.9\linewidth]{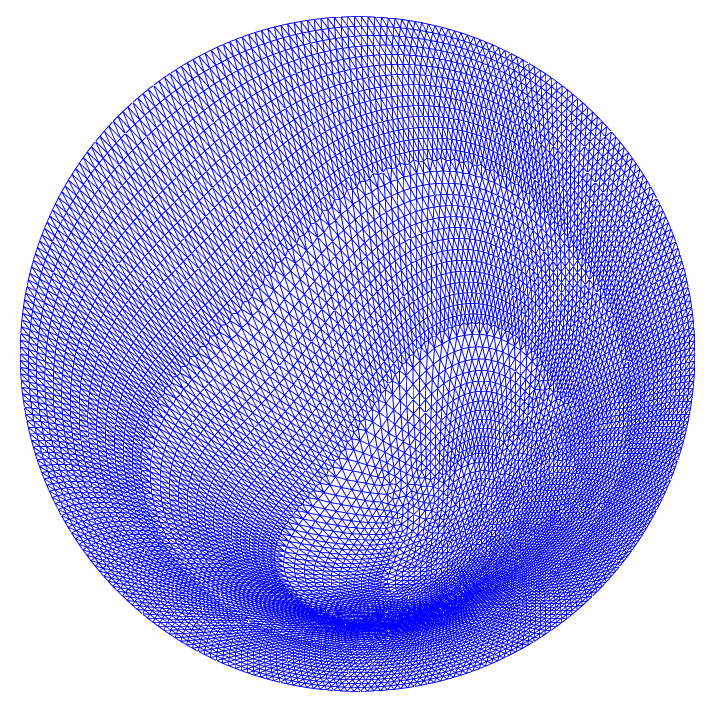}
\caption{The initial map}
\label{fig:e1_initial}
\end{subfigure}%
\\
\begin{subfigure}[t]{.42\textwidth}
\centering
\includegraphics[width=.9\linewidth]{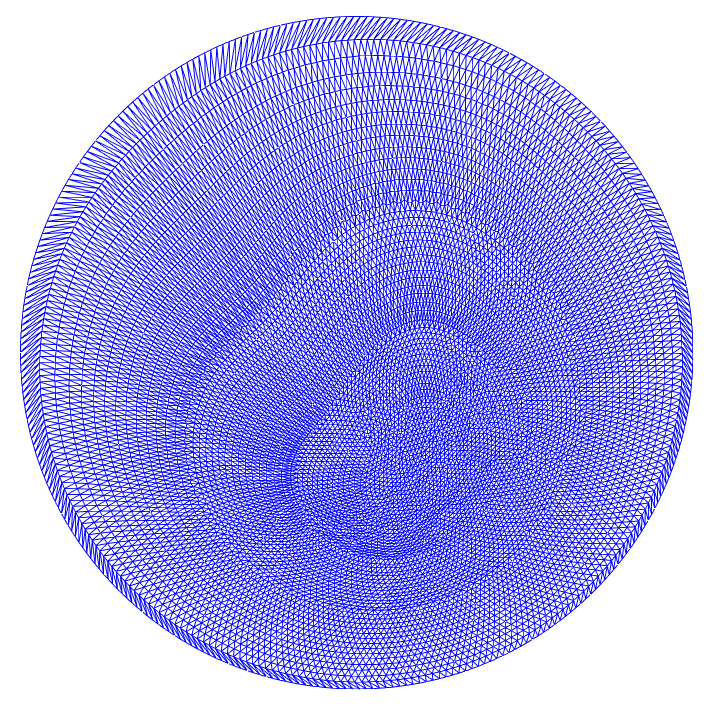}
\caption{Without quasiconformal regularization}
\label{fig:e1_result1}
\end{subfigure}%
\hfill
\begin{subfigure}[t]{.42\textwidth}
\centering
\includegraphics[width=.9\linewidth]{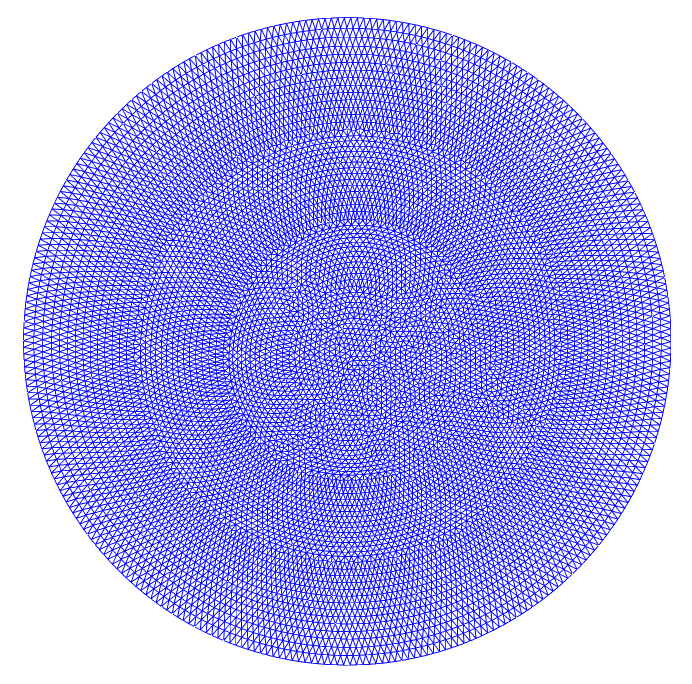}
\caption{With quasiconformal regularization}
\label{fig:e1_result2}
\end{subfigure}%
\end{figure}

Fig.~\ref{fig:e1_result1} shows the minimizer of~\ref{eq:energy_measure_qc} without any quasiconformal regularization. As we can see, this mesh is not regular enough, especially near the boundary. Fig.~\ref{fig:e1_result2} shows the minimizer of~\ref{eq:energy_measure_qc} with quasiconformal regularization. Even though the quality of the initial map is not good enough, it can be highly intensified through the optimization process.

\subsection{The effects of parameters $t_1$, $t_2$, and $t_3$}
In the presentation of our algorithm~\ref{sec:algorithm}, we propose to alternate between optimizing for one component of the energy~\ref{eq:energy_measure_qc} to have an updated mapping. Whether we attach more importance to the relative entropy or the quasiconformality depends on the choice of the parameters $t_1$, $t_2$, and $t_3$. If we hope to obtain a final result with smaller relative entropy, we just make $t_1$ larger, and vice versa. The effects of the parameters are shown intuitively in the following experiment.

\begin{figure}
\begin{subfigure}[t]{.33\textwidth}
\centering
\includegraphics[width=.9\linewidth]{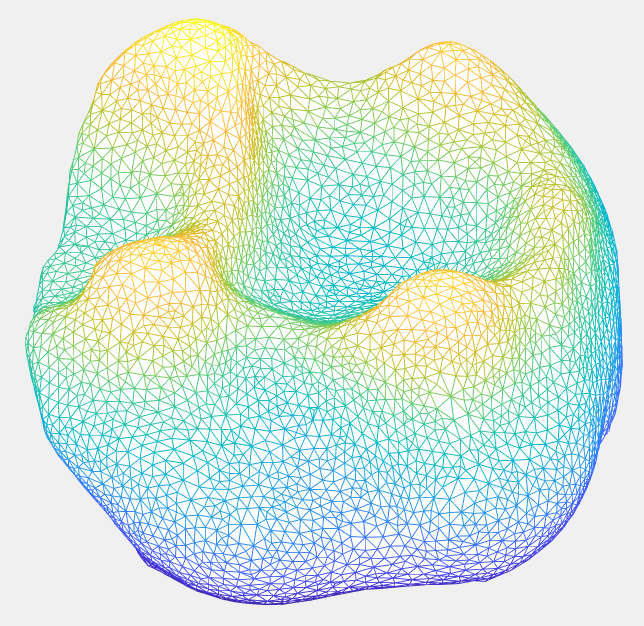}
\caption{A surface modeling a tooth}
\label{fig:e2_original}
\end{subfigure}%
\begin{subfigure}[t]{.33\textwidth}
\centering
\includegraphics[width=.9\linewidth]{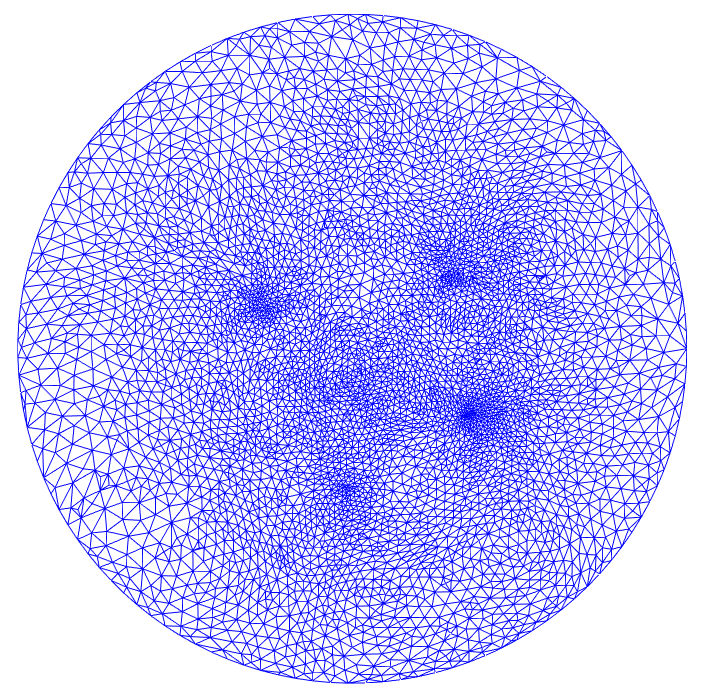}
\caption{The initial map}
\label{fig:e2_initial}
\end{subfigure}%
\begin{subfigure}[t]{.33\textwidth}
\centering
\includegraphics[width=.9\linewidth]{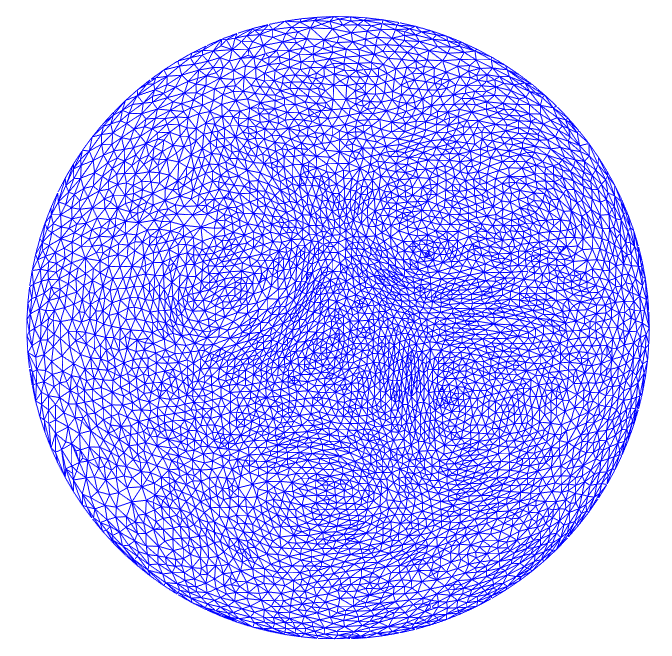}
\caption{Result 1}
\label{fig:e2_result1}
\end{subfigure}
\begin{subfigure}[t]{.33\textwidth}
\centering
\includegraphics[width=.9\linewidth]{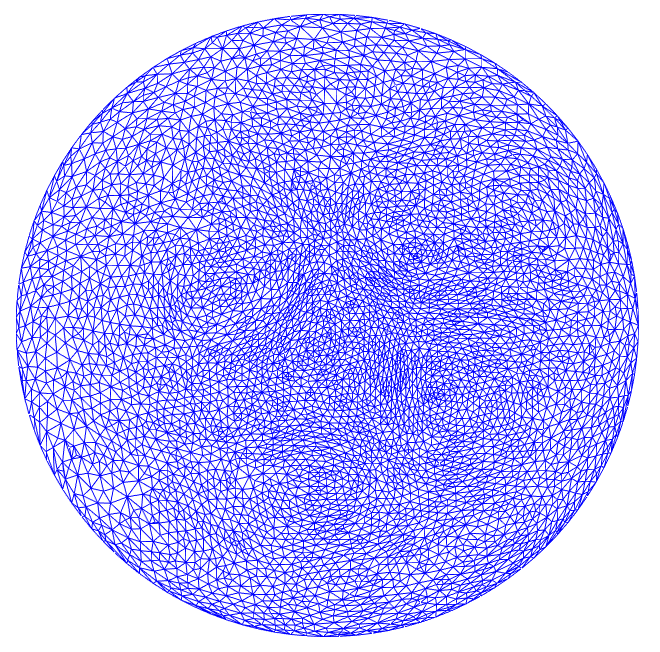}
\caption{Result 2}
\label{fig:e2_result2}
\end{subfigure}
\begin{subfigure}[t]{.32\textwidth}
\centering
\includegraphics[width=.9\linewidth]{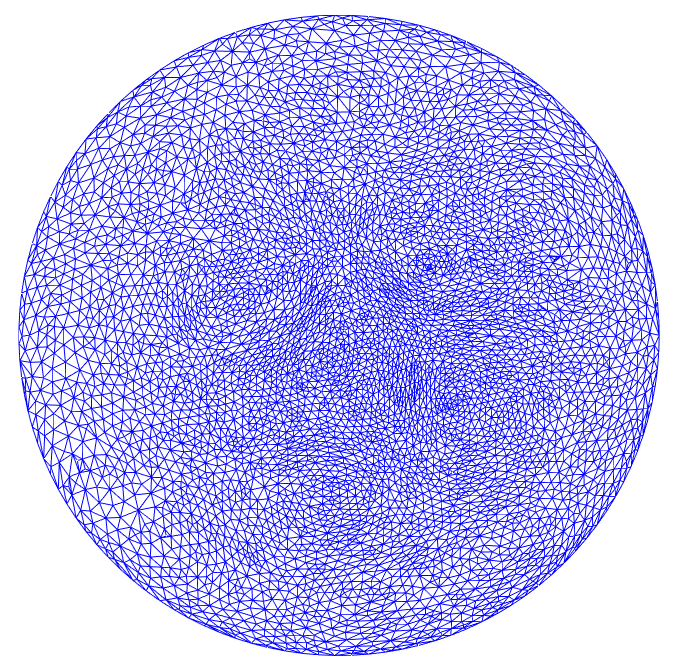}
\caption{Result 3}
\label{fig:e2_result3}
\end{subfigure}
\begin{subfigure}[t]{.32\textwidth}
\centering
\includegraphics[width=.9\linewidth]{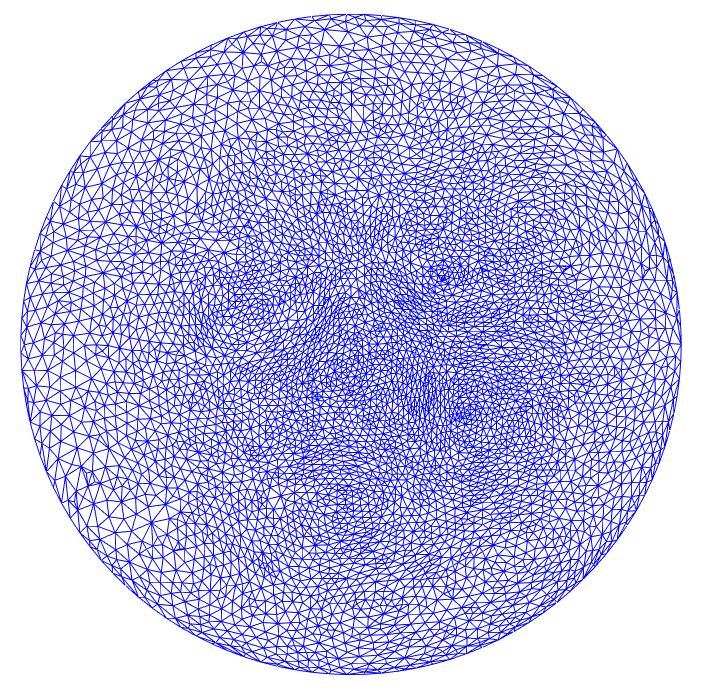}
\caption{Result 4}
\label{fig:e2_result4}
\end{subfigure}

\end{figure}

Fig.~\ref{fig:e2_original} shows a surface that models a tooth. Fig.~\ref{fig:e2_initial} shows the initial harmonic map with respect to a Dirichlet boundary condition. The probability measure on the surface is induced by the area and the reference probability measure is the Lebesgue measure divided by a constant. The initial conformal structure is  given by a free boundary conformal parameterization of the surface. Fig.~\ref{fig:e1_result1}, Fig.~\ref{fig:e2_result2}, Fig.~\ref{fig:e2_result3}, and Fig.~\ref{fig:e2_result4} demonstrate the results of the optimization problem with respect to 4 sets of parameters. The relative entropy of the initial map is 0.2695 and the $L^2$ norm of the Beltrami coefficient of it is 0.0276. The relative entropy and the $L^2$ norm of Beltrami coefficients of the results, together with the parameters, are shown in the table~\ref{tab:Ex2}. For each result, we run 50 iterations with the parameters chosen. Notice that when $t_2$ and $t_3$ become larger, the final entropy will increase and the final $||\mu||_{L^2}$ will decrease. We make a final remark that $t_1$, and $t_3$ are not the levels of discretization discussed in~\ref{sec:algorithm}. For better convergence, a smaller time scale is chosen for the discretization.

\begin{table}[t!]
    \centering
    \begin{tabular}{c|c|c|c|c|c}
         & $t_1$ & $t_2$ & $t_3$ & Final entropy & final $||\mu||_{L^2}$ \\ \hline 
        Result 1 & 0.0015 & 0 & 0 & 0.0314 & 0.2241 \\
        Result 2 & 0.0015 & 0.01 & 0.01 & 0.0498 & 0.1946 \\
        Result 3 & 0.0015 & 0.02 & 0.03 & 0.0686 & 0.1732 \\
        Result 4 & 0.0015 & 0.04 & 0.05 & 0.0997 & 0.1458
    \end{tabular}
    \caption{Some set of parameters and corresponding errors}
    \label{tab:Ex2}
\end{table}

\subsection{Some remeshing results}
One advantage of the algorithm proposed in this article is that one can achieve various data distributions on a mesh by specifying different initial and reference probability measures. This flexibility is quite useful for the remeshing of surfaces. Given a triangulated surface $\mathcal{S}$ in $\R^3$, we can first map it to a parameter domain $D$ in $\R^2$, which we choose as a circular domain in our experiments. After that, we can generate a new mesh on the parameter domain $D$. The inverse map from the new mesh to the surface $\mathcal{S}$ gives a new mesh structure of $\mathcal{S}$. For example, Fig.~\ref{fig:e3_original} shows a triangulated surface and Fig.~\ref{fig:e3_initial} shows the initial harmonic map from the surface to a circular domain. Fig.~\ref{fig:e3_newmesh} shows a mesh structure generated on this domain using the DistMesh toolbox~\cite{persson2004simple}. 

\begin{figure}
\begin{subfigure}[t]{.33\textwidth}
\centering
\includegraphics[width=.9\linewidth]{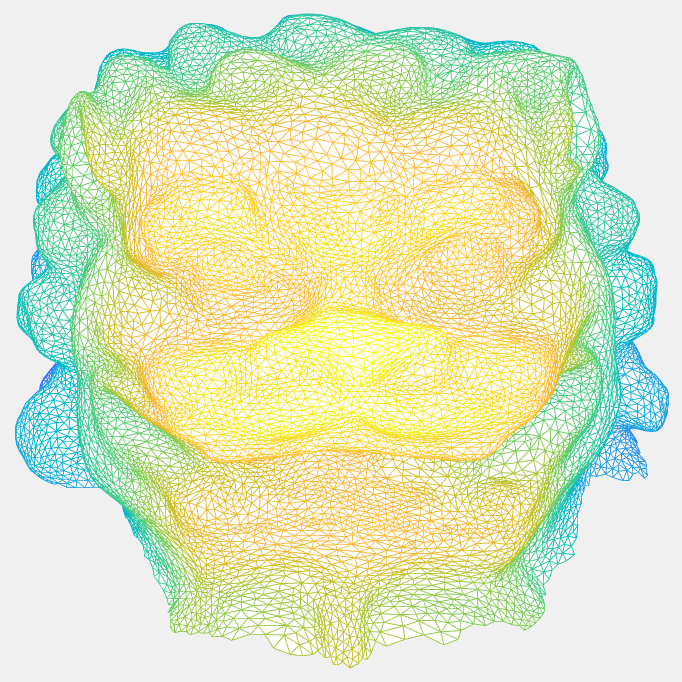}
\caption{The original mesh}
\label{fig:e3_original}
\end{subfigure}%
\begin{subfigure}[t]{.33\textwidth}
\centering
\includegraphics[width=.9\linewidth]{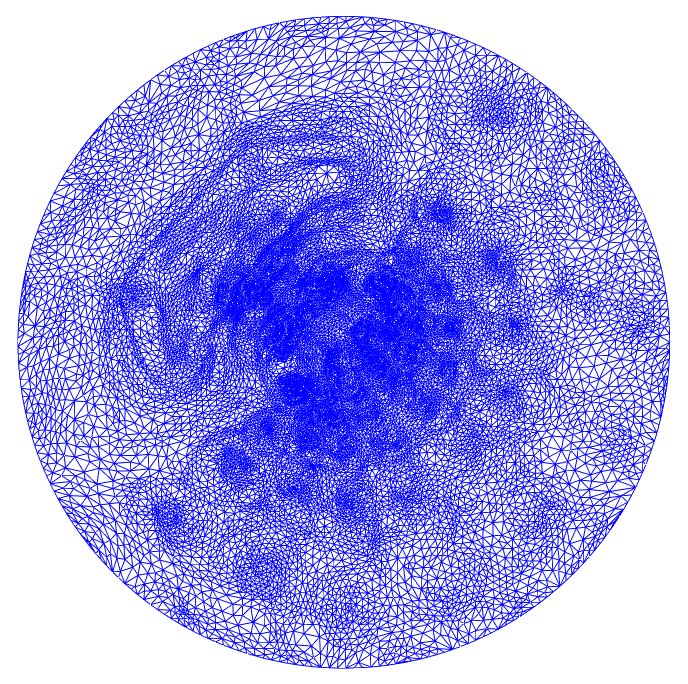}
\caption{The initial map}
\label{fig:e3_initial}
\end{subfigure}%
\begin{subfigure}[t]{.33\textwidth}
\centering
\includegraphics[width=.9\linewidth]{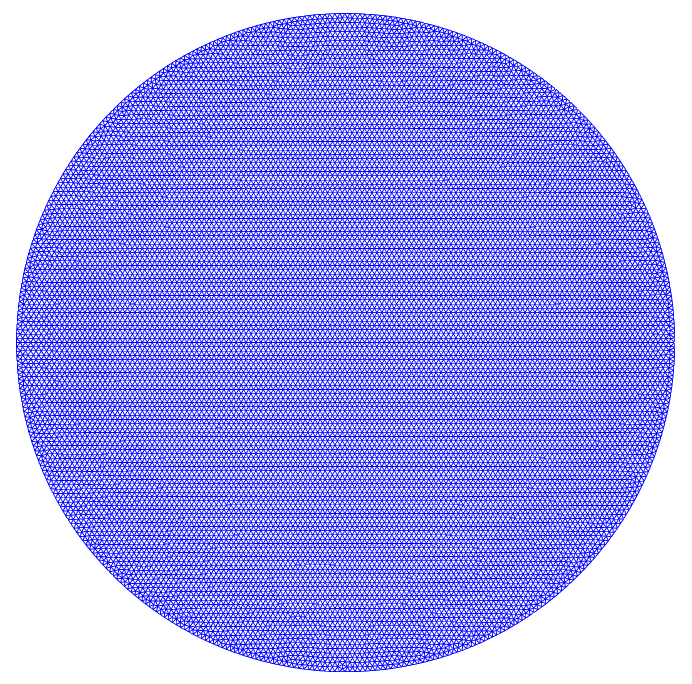}
\caption{A new mesh generated}
\label{fig:e3_newmesh}
\end{subfigure}%
\end{figure}

Fig.~\ref{fig:e3_results} shows some results of the optimization problem~\ref{eq:energy_measure_qc} with different initial and reference probability measures. To ensure consistency, the parameters $t_1$, $t_2$, and $t_3$ are fixed. As we can see, the vertex distributions in the parameter domain are quite different. As a result, the remeshing results given by the four maps are quite different from each other. This can be seen from Fig.~\ref{fig:e3_remeshes}. A general observation is that if the vertex distribution in some part of the parameter domain is sparse, then the distribution in the corresponding part of the remeshed surface will become dense. This can be seen by comparing the vertex distribution of Fig.~\ref{fig:e3_result2} and that of Fig.~\ref{fig:e3_remesh2}. Besides, since the Beltrami coefficient of the final maps are controlled by the second term of~\ref{eq:energy_measure_qc}, the triangles will not be greatly distorted when they are mapped back from~\ref{fig:e3_newmesh} to the surface.

\begin{figure}
\caption{Some maps by specifying different initial and reference probability measures}
\label{fig:e3_results}
\begin{subfigure}[t]{.25\textwidth}
\centering
\includegraphics[width=.9\linewidth]{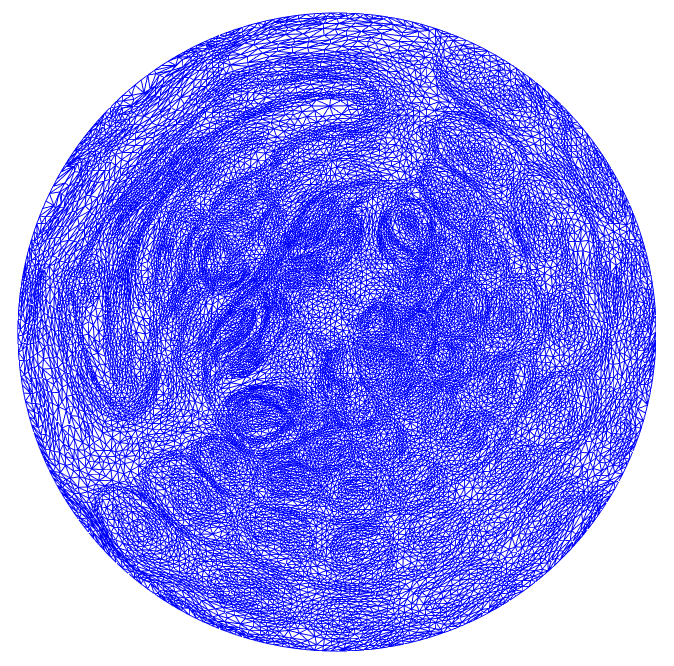}
\caption{Result 1}
\label{fig:e3_result1}
\end{subfigure}%
\begin{subfigure}[t]{.25\textwidth}
\centering
\includegraphics[width=.9\linewidth]{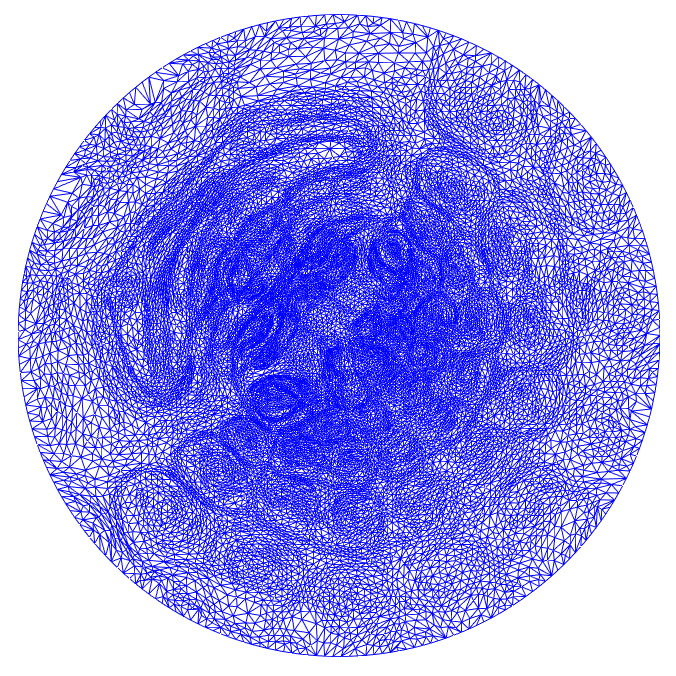}
\caption{Result 2}
\label{fig:e3_result2}
\end{subfigure}%
\begin{subfigure}[t]{.25\textwidth}
\centering
\includegraphics[width=.9\linewidth]{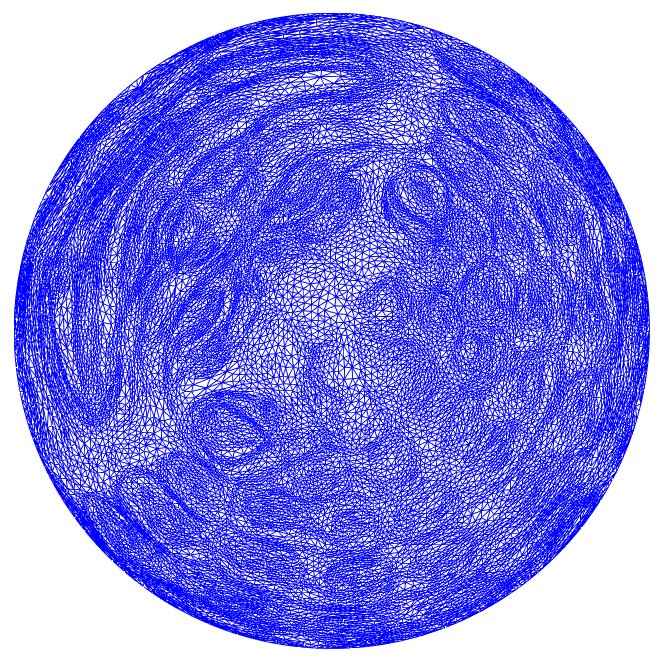}
\caption{Result 3}
\label{fig:e3_result3}
\end{subfigure}%
\begin{subfigure}[t]{.25\textwidth}
\centering
\includegraphics[width=.9\linewidth]{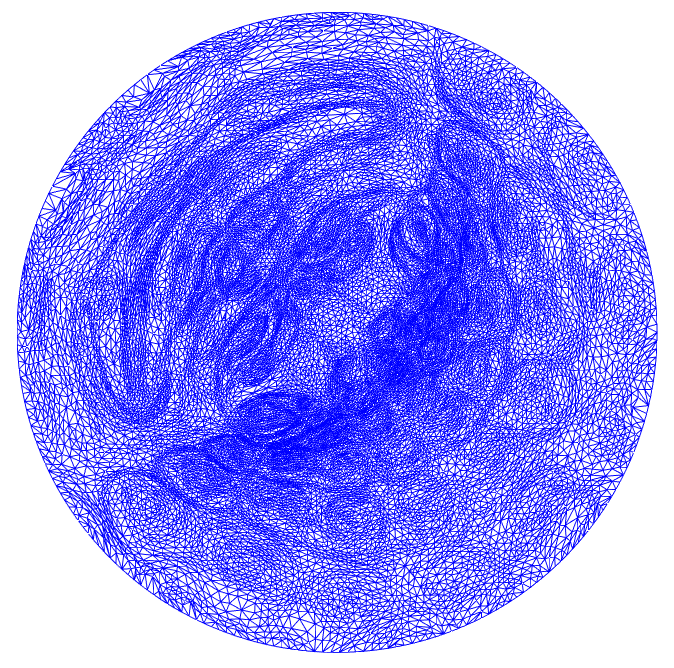}
\caption{Result 4}
\label{fig:e3_result4}
\end{subfigure}%
\end{figure}

\begin{figure}
\caption{The remeshing results given by the maps shown above}
\label{fig:e3_remeshes}
\begin{subfigure}[t]{.25\textwidth}
\centering
\includegraphics[width=.9\linewidth]{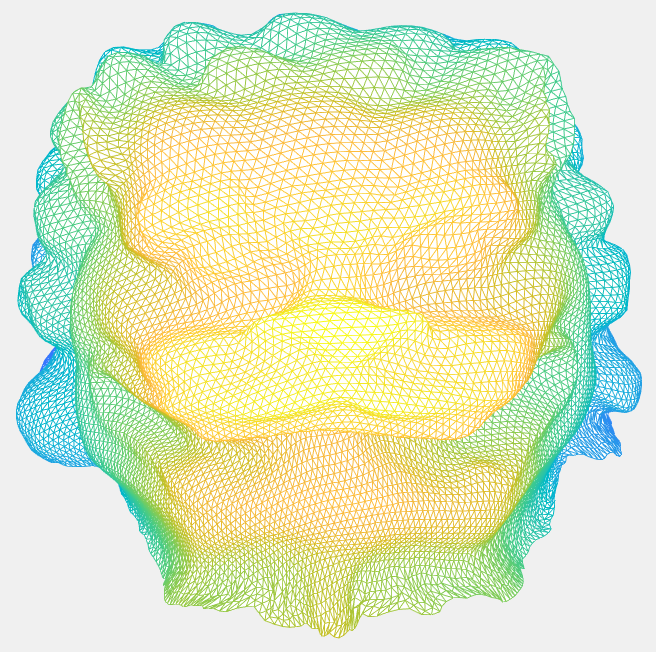}
\caption{Remeshed 1}
\label{fig:e3_remesh1}
\end{subfigure}%
\begin{subfigure}[t]{.25\textwidth}
\centering
\includegraphics[width=.9\linewidth]{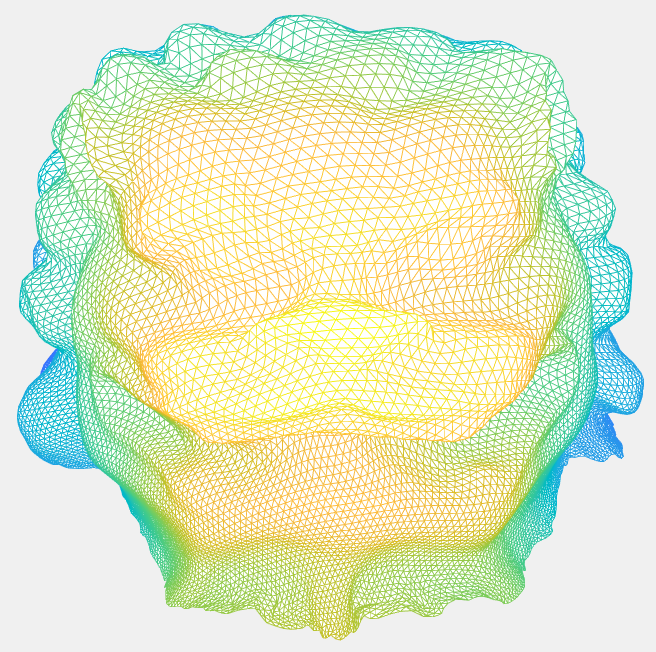}
\caption{Remeshed 2}
\label{fig:e3_remesh2}
\end{subfigure}%
\begin{subfigure}[t]{.25\textwidth}
\centering
\includegraphics[width=.9\linewidth]{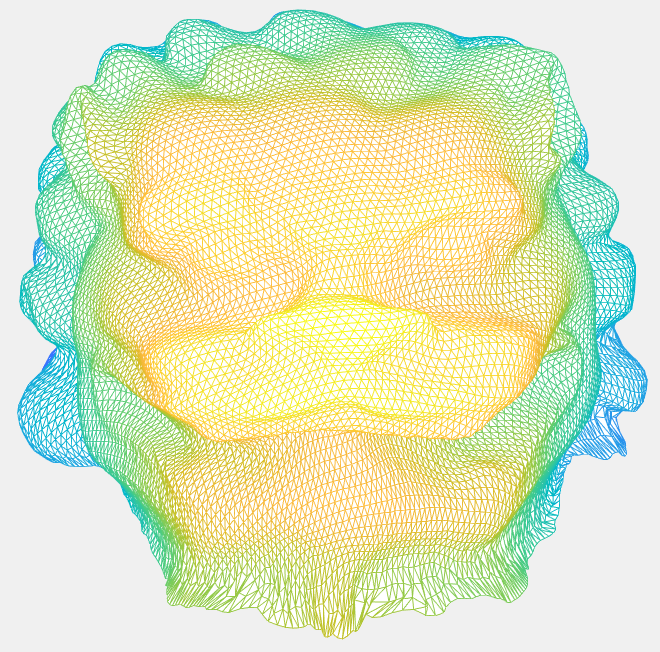}
\caption{Remeshed 3}
\label{fig:e3_remesh3}
\end{subfigure}%
\begin{subfigure}[t]{.25\textwidth}
\centering
\includegraphics[width=.9\linewidth]{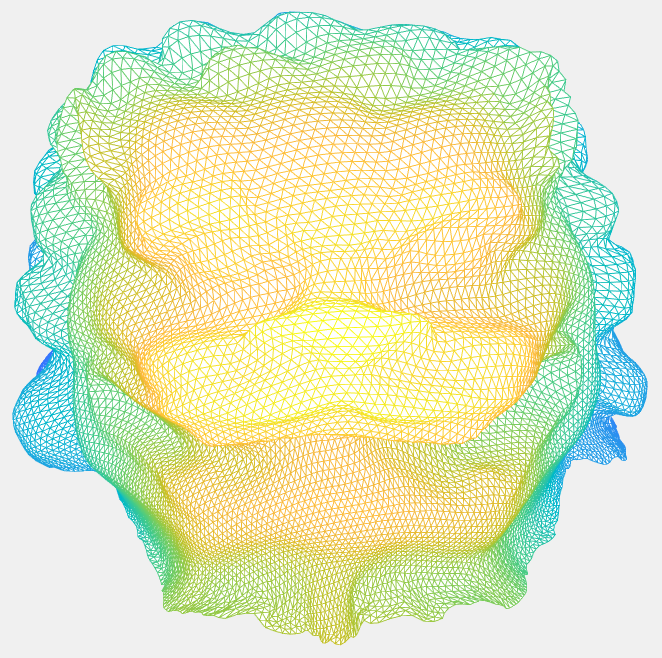}
\caption{Remeshed 4}
\label{fig:e3_remesh4}
\end{subfigure}%
\end{figure}

\bibliographystyle{siamplain}
\bibliography{references}

\begin{thebibliography}{10}

\bibitem{ahlfors1960riemann}
{\sc L.~Ahlfors and L.~Bers}, {\em Riemann's mapping theorem for variable metrics}, Annals of Mathematics, 72 (1960), pp.~385--404.

\bibitem{ambrosio2008gradient}
{\sc L.~Ambrosio, N.~Gigli, and G.~Savar{\'e}}, {\em Gradient flows: in metric spaces and in the space of probability measures}, Springer Science \& Business Media, 2008.

\bibitem{astala2008elliptic}
{\sc K.~Astala, T.~Iwaniec, and G.~Martin}, {\em Elliptic Partial Differential Equations and Quasiconformal Mappings in the Plane (PMS-48)}, Princeton University Press, 2008.

\bibitem{beg2005computing}
{\sc M.~F. Beg, M.~I. Miller, A.~Trouv{\'e}, and L.~Younes}, {\em Computing large deformation metric mappings via geodesic flows of diffeomorphisms}, International journal of computer vision, 61 (2005), pp.~139--157.

\bibitem{benamou2016discretization}
{\sc J.-D. Benamou, G.~Carlier, Q.~M{\'e}rigot, and E.~Oudet}, {\em Discretization of functionals involving the monge--amp{\`e}re operator}, Numerische mathematik, 134 (2016), pp.~611--636.

\bibitem{bessemoulin2012finite}
{\sc M.~Bessemoulin-Chatard and F.~Filbet}, {\em A finite volume scheme for nonlinear degenerate parabolic equations}, SIAM Journal on Scientific Computing, 34 (2012), pp.~B559--B583.

\bibitem{blanchet2008convergence}
{\sc A.~Blanchet, V.~Calvez, and J.~A. Carrillo}, {\em Convergence of the mass-transport steepest descent scheme for the subcritical patlak--keller--segel model}, SIAM Journal on Numerical Analysis, 46 (2008), pp.~691--721.

\bibitem{burger2009mixed}
{\sc M.~Burger, J.~A. Carrillo de~la Plata, and M.-T. Wolfram}, {\em A mixed finite element method for nonlinear diffusion equations},  (2009).

\bibitem{carrillo2019blob}
{\sc J.~A. Carrillo, K.~Craig, and F.~S. Patacchini}, {\em A blob method for diffusion}, Calculus of Variations and Partial Differential Equations, 58 (2019), pp.~1--53.

\bibitem{carrillo2022primal}
{\sc J.~A. Carrillo, K.~Craig, L.~Wang, and C.~Wei}, {\em Primal dual methods for wasserstein gradient flows}, Foundations of Computational Mathematics,  (2022), pp.~1--55.

\bibitem{carrillo2010numerical}
{\sc J.~A. Carrillo and J.~S. Moll}, {\em Numerical simulation of diffusive and aggregation phenomena in nonlinear continuity equations by evolving diffeomorphisms}, SIAM Journal on Scientific Computing, 31 (2010), pp.~4305--4329.

\bibitem{choi2020tooth}
{\sc G.~P. Choi, H.~L. Chan, R.~Yong, S.~Ranjitkar, A.~Brook, G.~Townsend, K.~Chen, and L.~M. Lui}, {\em Tooth morphometry using quasi-conformal theory}, Pattern Recognition, 99 (2020), p.~107064.

\bibitem{choi2020shape}
{\sc G.~P. Choi, D.~Qiu, and L.~M. Lui}, {\em Shape analysis via inconsistent surface registration}, Proceedings of the Royal Society A, 476 (2020), p.~20200147.

\bibitem{choi2018density}
{\sc G.~P. Choi and C.~H. Rycroft}, {\em Density-equalizing maps for simply connected open surfaces}, SIAM Journal on Imaging Sciences, 11 (2018), pp.~1134--1178.

\bibitem{cui2019spherical}
{\sc L.~Cui, X.~Qi, C.~Wen, N.~Lei, X.~Li, M.~Zhang, and X.~Gu}, {\em Spherical optimal transportation}, Computer-Aided Design, 115 (2019), pp.~181--193.

\bibitem{desbrun2002intrinsic}
{\sc M.~Desbrun, M.~Meyer, and P.~Alliez}, {\em Intrinsic parameterizations of surface meshes}, in Computer graphics forum, vol.~21, Wiley Online Library, 2002, pp.~209--218.

\bibitem{during2010gradient}
{\sc B.~D{\"u}ring, D.~Matthes, and J.~P. Mili{\v{s}}ic}, {\em A gradient flow scheme for nonlinear fourth order equations}, Discrete Contin. Dyn. Syst. Ser. B, 14 (2010), pp.~935--959.

\bibitem{floater1997parametrization}
{\sc M.~S. Floater}, {\em Parametrization and smooth approximation of surface triangulations}, Computer aided geometric design, 14 (1997), pp.~231--250.

\bibitem{gu2004genus}
{\sc X.~Gu, Y.~Wang, T.~F. Chan, P.~M. Thompson, and S.-T. Yau}, {\em Genus zero surface conformal mapping and its application to brain surface mapping}, IEEE transactions on medical imaging, 23 (2004), pp.~949--958.

\bibitem{jin2008discrete}
{\sc M.~Jin, J.~Kim, F.~Luo, and X.~Gu}, {\em Discrete surface ricci flow}, IEEE Transactions on Visualization and Computer Graphics, 14 (2008), pp.~1030--1043.

\bibitem{jordan1998variational}
{\sc R.~Jordan, D.~Kinderlehrer, and F.~Otto}, {\em The variational formulation of the fokker--planck equation}, SIAM journal on mathematical analysis, 29 (1998), pp.~1--17.

\bibitem{junge2017fully}
{\sc O.~Junge, D.~Matthes, and H.~Osberger}, {\em A fully discrete variational scheme for solving nonlinear fokker--planck equations in multiple space dimensions}, SIAM Journal on Numerical Analysis, 55 (2017), pp.~419--443.

\bibitem{lam2014landmark}
{\sc K.~C. Lam and L.~M. Lui}, {\em Landmark-and intensity-based registration with large deformations via quasi-conformal maps}, SIAM Journal on Imaging Sciences, 7 (2014), pp.~2364--2392.

\bibitem{levy2023least}
{\sc B.~L{\'e}vy, S.~Petitjean, N.~Ray, and J.~Maillot}, {\em Least squares conformal maps for automatic texture atlas generation}, in Seminal Graphics Papers: Pushing the Boundaries, Volume 2, 2023, pp.~193--202.

\bibitem{lui2013texture}
{\sc L.~M. Lui, K.~C. Lam, T.~W. Wong, and X.~Gu}, {\em Texture map and video compression using beltrami representation}, SIAM Journal on Imaging Sciences, 6 (2013), pp.~1880--1902.

\bibitem{lui2012optimization}
{\sc L.~M. Lui, T.~W. Wong, W.~Zeng, X.~Gu, P.~M. Thompson, T.~F. Chan, and S.-T. Yau}, {\em Optimization of surface registrations using beltrami holomorphic flow}, Journal of scientific computing, 50 (2012), pp.~557--585.

\bibitem{persson2004simple}
{\sc P.-O. Persson and G.~Strang}, {\em A simple mesh generator in matlab}, SIAM review, 46 (2004), pp.~329--345.

\bibitem{pinkall1993computing}
{\sc U.~Pinkall and K.~Polthier}, {\em Computing discrete minimal surfaces and their conjugates}, Experimental mathematics, 2 (1993), pp.~15--36.

\bibitem{qiu2019computing}
{\sc D.~Qiu, K.-C. Lam, and L.-M. Lui}, {\em Computing quasi-conformal folds}, SIAM Journal on Imaging Sciences, 12 (2019), pp.~1392--1424.

\bibitem{qiu2020inconsistent}
{\sc D.~Qiu and L.~M. Lui}, {\em Inconsistent surface registration via optimization of mapping distortions}, Journal of Scientific Computing, 83 (2020), p.~64.

\bibitem{remacle2010high}
{\sc J.-F. Remacle, C.~Geuzaine, G.~Compere, and E.~Marchandise}, {\em High-quality surface remeshing using harmonic maps}, International Journal for Numerical Methods in Engineering, 83 (2010), pp.~403--425.

\bibitem{sun2018discontinuous}
{\sc Z.~Sun, J.~A. Carrillo, and C.-W. Shu}, {\em A discontinuous galerkin method for nonlinear parabolic equations and gradient flow problems with interaction potentials}, Journal of Computational Physics, 352 (2018), pp.~76--104.

\bibitem{weber2012computing}
{\sc O.~Weber, A.~Myles, and D.~Zorin}, {\em Computing extremal quasiconformal maps}, in Computer Graphics Forum, vol.~31, Wiley Online Library, 2012, pp.~1679--1689.

\bibitem{zeng2012computing}
{\sc W.~Zeng, L.~M. Lui, F.~Luo, T.~F.-C. Chan, S.-T. Yau, and D.~X. Gu}, {\em Computing quasiconformal maps using an auxiliary metric and discrete curvature flow}, Numerische Mathematik, 121 (2012), pp.~671--703.

\bibitem{zhang2005feature}
{\sc E.~Zhang, K.~Mischaikow, and G.~Turk}, {\em Feature-based surface parameterization and texture mapping}, ACM Transactions on Graphics (TOG), 24 (2005), pp.~1--27.

\bibitem{zhang2017frequency}
{\sc M.~Zhang, R.~Liao, A.~V. Dalca, E.~A. Turk, J.~Luo, P.~E. Grant, and P.~Golland}, {\em Frequency diffeomorphisms for efficient image registration}, in International conference on information processing in medical imaging, Springer, 2017, pp.~559--570.

\bibitem{zhao2013area}
{\sc X.~Zhao, Z.~Su, X.~D. Gu, A.~Kaufman, J.~Sun, J.~Gao, and F.~Luo}, {\em Area-preservation mapping using optimal mass transport}, IEEE transactions on visualization and computer graphics, 19 (2013), pp.~2838--2847.

\bibitem{zhu2022parallelizable}
{\sc Z.~Zhu, G.~P. Choi, and L.~M. Lui}, {\em Parallelizable global quasi-conformal parameterization of multiply connected surfaces via partial welding}, SIAM Journal on Imaging Sciences, 15 (2022), pp.~1765--1807.

\bibitem{zou2011authalic}
{\sc G.~Zou, J.~Hu, X.~Gu, and J.~Hua}, {\em Authalic parameterization of general surfaces using lie advection}, IEEE Transactions on Visualization and Computer Graphics, 17 (2011), pp.~2005--2014.

\end{thebibliography}
\end{document}


\maketitle

\section{A detailed example}

Here we include some equations and theorem-like environments to show
how these are labeled in a supplement and can be referenced from the
main text.
Consider the following equation:
\begin{equation}
  \label{eq:suppa}
  a^2 + b^2 = c^2.
\end{equation}
You can also reference equations such as \cref{eq:matrices,eq:bb} 
from the main article in this supplement.

\lipsum[100-101]

\begin{theorem}
An example theorem.
\end{theorem}

\lipsum[102]
 
\begin{lemma}
An example lemma.
\end{lemma}

\lipsum[103-105]

Here is an example citation: \cite{KoMa14}.

\section[Proof of Thm]{Proof of \cref{thm:bigthm}}
\label{sec:proof}

\lipsum[106-112]

\section{Additional experimental results}
\Cref{tab:smfoo} shows additional
supporting evidence. 

\begin{table}[htbp]
\footnotesize
  \caption{Example table.}\label{tab:smfoo}
\begin{center}
  \begin{tabular}{|c|c|c|} \hline
   Species & \bf Mean & \bf Std.~Dev. \\ \hline
    1 & 3.4 & 1.2 \\
    2 & 5.4 & 0.6 \\ \hline
  \end{tabular}
\end{center}
\end{table}

\bibliographystyle{siamplain}
\bibliography{references}